\newtheorem{theorem}{Theorem}[section]
\newtheorem{corollary}[theorem]{Corollary}
\newtheorem{lemma}[theorem]{Lemma}
\newtheorem{proposition}[theorem]{Proposition}
\theoremstyle{definition}
\newtheorem{definition}{Definition}[section]
\theoremstyle{remark}
\newtheorem{remark}{Remark}[section]
\numberwithin{equation}{section}
\begin{document}

\title{Holomorphic harmonic analysis on complex reductive groups}

\author{Jinpeng An}
\address{School of mathematical sciences, Peking University,
 Beijing, 100871, P. R. China. Current address: Department of Pure Mathematics, University of
Waterloo Waterloo, Ontario N2L 3G1, Canada}
\email{j11an@math.uwaterloo.ca}

\author{Zhengdong Wang}
\address{School of mathematical sciences, Peking University,
 Beijing, 100871, P. R. China}
\email{zdwang@pku.edu.cn}

\author{Min Qian}
\address{School of mathematical sciences, Peking University,
 Beijing, 100871, P. R. China}

\thanks{This work is supported by the 973 Project
Foundation of China (\#TG1999075102).}

\begin{abstract}
We define the holomorphic Fourier transform of holomorphic functions
on complex reductive groups, prove some properties like the Fourier
inversion formula, and give some applications. The definition of the
holomorphic Fourier transform makes use of the notion of
$K$-admissible measures. We prove that $K$-admissible measures are
abundant, and the definition of holomorphic Fourier transform is
independent of the choice of $K$-admissible measures.
\end{abstract}

\subjclass[2000]{43A30; 43A80; 22E46; 22E30.}

\keywords{Complex reductive group, Fourier transform, Holomorphic
representation.}

\maketitle

\section{Introduction}

Let $f$ be a holomorphic function on $\mathbb{C}\setminus\{0\}$. It
is a standard result in complex analysis that the Laurant series
$$f(z)=\sum_{n=-\infty}^{+\infty}a_nz^n$$ of $f$ converges locally uniformly on
$\mathbb{C}\setminus\{0\}$. From the viewpoint of representation
theory, $z\mapsto z^n$ $(n\in\mathbb{Z})$ are the holomorphic
representations of the multiplicative group
$\mathbb{C}\setminus\{0\}$. In this paper, we generalize this fact
to complex reductive groups.

Let $G$ be a complex reductive group. Denote the space of
holomorphic functions on $G$ by $\mathcal{H}(G)$. Our main goal is
to expand any $f\in\mathcal{H}(G)$ as a holomorphic Fourier series
$$f=\sum_{\pi,i,j}\lambda_{\pi,ij}\pi_{ij},$$ which converges locally
uniformly on $G$, where $\pi$ runs over all holomorphic
representations of $G$, $\pi_{ij}$ are the matrix elements of $\pi$.
Note that $\mathcal{H}(G)$ can be endowed with a structure of
Fr\'echet space for which convergence is equivalent to locally
uniform convergence, the Fourier expansion implies that the subspace
$\mathcal{E}$ of $\mathcal{H}(G)$ consists of linear combinations of
matrix elements of holomorphic representations of $G$ is dense in
$\mathcal{H}(G)$. We then prove that the holomorphic Fourier
expansion satisfies the usual properties of Fourier expansion like
the Fourier inversion formula and the Plancherel Theorem. We also
provide applications of the holomorphic Fourier expansion to
holomorphic class functions and holomorphic evolution partial
differential equations on $G$.

To do this, we select a class of auxiliary measures on $G$, which
are called \emph{$K$-admissible measures} in this paper. A measure
$d\mu$ on $G$ is $K$-admissible if\\
(1) $d\mu$ is of the form $d\mu(g)=\mu(g)dg$, where $dg$ is a (left)
Haar measure, $\mu$ is a measurable function on $G$ locally bounded
from below;\\
(2) all holomorphic representations of $G$ are
$L^2$-$d\mu$-integrable; and\\
(3) $d\mu$ is $K$-bi-invariant, where $K$ is a chosen maximal
compact subgroup of $G$.\\
Condition (1) implies that an $L^2$-$d\mu$-convergent sequence of
holomorphic functions is locally uniformly convergent. So we can
prove the locally uniform convergence of a sequence of holomorphic
functions by showing that it is $L^2$-$d\mu$-convergent, which is
usually easier to handle. Condition (2) ensures that the matrix
elements of holomorphic representations are $L^2$-$d\mu$-integrable.
So we can first expand functions as Fourier series in the $L^2$
sense, and get the locally uniform convergence by Condition (1).
Condition (3) enable us to use representation theory of the compact
group $K$. We will show that $K$-admissible measures on $G$ are
abundant enough. In particular, for any $f\in\mathcal{H}(G)$, there
exist a $K$-admissible measure $d\mu$ such that $f$ is
$L^2$-$d\mu$-integrable.

The main idea to expand holomorphic functions on $G$ as Fourier
series is as follows. We first prove two theorems of Peter-Weyl-type
on $G$, which, among other things, provide an orthogonal basis of
the Hilbert space of $L^2$-$d\mu$-integrable holomorphic functions,
consisting of the matrix elements of holomorphic representations of
$G$. Then, for a holomorphic function $f$ on $G$ that is to be
expanded, we choose a $K$-admissible measure $d\mu$ such that $f$ is
$L^2$-$d\mu$-integrable. By the Peter-Weyl-type theorem, $f$ can be
expanded as a Fourier series in the $L^2$ sense. Since
$L^2$-$d\mu$-convergence implies locally uniform convergence, we get
the locally uniformly convergent Fourier expansion of $f$. Then we
prove that such expansion is independent of the choice of the
$K$-admissible measure $d\mu$.

Now we review the contents of the following sections more closely.
After a brief recollection of properties of complex reductive groups
in Section 2, we will define the notion of $K$-admissible measures
in Section 3, and prove that they are abundant. Two theorems of
Peter-Weyl-type, that is, the $L^2$ case and the locally uniform
case, will be proved in Section 4. The $L^2$ Peter-Weyl-type theorem
is due to Hall \cite{Ha}. In his proof of the completeness part of
the theorem, Hall used some analytical techniques like
change-of-variables on $G$, Laplace operators, and the Monotone
Convergence Theorem. Our proof of the completeness part will be
group-representation-theoretic. We will make use of the complete
reducibility of representations of compact groups. The holomorphic
Fourier transform will be studied in Section 5. The Fourier
inversion formula and the Plancherel Theorem will be proved. Some
basic properties of the holomorphic Fourier transform will also be
given. Section 6 will be devoted to applications of the holomorphic
Fourier transform to holomorphic class functions and holomorphic
evolution partial differential equations on complex reductive
groups.

The first author would like to thank Professor Xufeng Liu for
helpful discussion, and for his successive help and encouragement.


\section{Preliminaries on complex reductive groups}

The notion of ``reductive groups'' has different definitions by
different authors. For technical reason, we adopt Hochschild
\cite{Ho}. We briefly recall some properties of complex reductive
groups below, which are to be used in the following sections. The
detailed proofs can be found in \cite{Ho} or other textbooks on Lie
groups. For simplicity, all Lie groups in this paper are assumed to
be connected, although many assertions also hold if ``connected'' is
replaced by ``with finitely many connected components''.

\begin{definition}\label{D:reductive}(Hochschild
\cite{Ho}) A complex Lie group $G$ is \emph{reductive} if $G$ admits
a faithful finite-dimensional holomorphic representation and every
finite-dimensional holomorphic representation of $G$ is completely
reducible.
\end{definition}

Every complex semisimple Lie group is reductive.
$\mathbb{C}^{*}=\mathbb{C}\backslash\{0\}$, as a multiplicative
group, is reductive. But $\mathbb{C}$ and
$\mathbb{C}/\mathbb{Z}^{2}$, with the canonical holomorphic
structures, are not reductive. In fact, we have

\begin{proposition}\label{P:structure}
A complex Lie group $G$ is reductive if and only if $G$ has the form
$(H\times(\mathbb{C}^*)^n)/\Gamma$, where $H$ is a simply connected
complex semisimple Lie group, $\Gamma$ is a finite central subgroup
of $H\times(\mathbb{C}^*)^n$.
\end{proposition}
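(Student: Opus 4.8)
The plan is to prove both implications by reducing the structure theorem to the classical structure theory of the Lie algebra $\mathfrak{g}=\mathrm{Lie}(G)$ together with the two defining conditions of Definition \ref{D:reductive}.

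For the ``if'' direction, suppose $G=(H\times(\mathbb{C}^*)^n)/\Gamma$. I would first verify that $H\times(\mathbb{C}^*)^n$ is reductive: $H$ is semisimple so all its holomorphic representations are completely reducible, the holomorphic representations of the torus $(\mathbb{C}^*)^n$ are direct sums of characters $z\mapsto z^m$, and complete reducibility is inherited by direct products. A faithful holomorphic representation is obtained by tensoring a faithful representation of $H$ (which exists since $H$ is linear) with the standard characters of $(\mathbb{C}^*)^n$. I would then observe that passing to the quotient by a finite central $\Gamma$ preserves both conditions: the holomorphic representations of $G$ are exactly those of $H\times(\mathbb{C}^*)^n$ that are trivial on $\Gamma$, so complete reducibility descends, and $G$ is again the complex points of a linear algebraic group (the algebraic quotient of a linear algebraic group by a finite central subgroup being affine), hence admits a faithful holomorphic representation.

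For the ``only if'' direction, since $G$ admits a faithful completely reducible representation, $\mathfrak{g}$ is a reductive Lie algebra, so $\mathfrak{g}=\mathfrak{z}\oplus[\mathfrak{g},\mathfrak{g}]$ with $\mathfrak{z}$ the center and $[\mathfrak{g},\mathfrak{g}]$ semisimple. Let $H$ be the simply connected complex Lie group with $\mathrm{Lie}(H)=[\mathfrak{g},\mathfrak{g}]$, let $\iota:H\to G$ integrate the inclusion $[\mathfrak{g},\mathfrak{g}]\hookrightarrow\mathfrak{g}$, and let $Z^0$ be the connected center of $G$, with $\mathrm{Lie}(Z^0)=\mathfrak{z}$. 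Since $Z^0$ is central, multiplication defines a holomorphic homomorphism $\phi:H\times Z^0\to G$, $(h,z)\mapsto\iota(h)z$, whose differential is the isomorphism $[\mathfrak{g},\mathfrak{g}]\oplus\mathfrak{z}\xrightarrow{\sim}\mathfrak{g}$. Hence $\phi$ is a local isomorphism; its image is open, so by connectedness $\phi$ is surjective, and $\Gamma:=\ker\phi$ is discrete, thus central. To identify $Z^0$, I would use that a connected abelian complex Lie group is isomorphic to $\mathbb{C}^p\times(\mathbb{C}^*)^q\times A$ with $A$ a compact complex torus; restricting a faithful representation of $G$ gives a faithful representation of $Z^0$, and since a positive-dimensional compact complex torus carries no nonconstant holomorphic functions this forces $A=\{1\}$, while the non-complete-reducibility of the standard two-dimensional representation of the additive group $\mathbb{C}$ rules out the factor $\mathbb{C}^p$, leaving $Z^0\cong(\mathbb{C}^*)^n$. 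For finiteness of $\Gamma$, note that $\phi$ restricted to $\{1\}\times Z^0$ is the inclusion $Z^0\hookrightarrow G$, hence injective, so $\Gamma\cap(\{1\}\times Z^0)=\{1\}$ and the first projection maps $\Gamma$ injectively into $H$; as $\Gamma$ is central, its image lies in $Z(H)$, which is finite because $H$ is simply connected semisimple. Therefore $\Gamma$ is finite and $G\cong(H\times(\mathbb{C}^*)^n)/\Gamma$.

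I expect the main obstacle to be the identification of the connected center as an algebraic torus $(\mathbb{C}^*)^n$, which is precisely where both defining conditions are indispensable: the existence of a faithful representation kills compact-torus (elliptic-curve) factors, while complete reducibility of all representations kills additive $\mathbb{C}$-factors. A possibly cleaner alternative organization is to invoke Hochschild's theorem that a reductive $G$ is the complexification $K_{\mathbb{C}}$ of its maximal compact subgroup $K$ and then apply the structure theorem for connected compact Lie groups, $K\cong(K_s\times(S^1)^n)/\Delta$ with $K_s$ simply connected semisimple and $\Delta$ finite central, complexifying to the desired form; there the remaining subtlety is to check that complexification is compatible with the finite central quotient.
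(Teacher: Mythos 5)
The paper itself gives no proof of Proposition~\ref{P:structure} --- it is quoted from Hochschild \cite{Ho} --- so your attempt is measured against the standard argument, whose architecture you reproduce correctly: the ``if'' direction by descent of both defining conditions through the finite central quotient, and the ``only if'' direction via $\mathfrak{g}=\mathfrak{z}\oplus[\mathfrak{g},\mathfrak{g}]$, the local isomorphism $\phi:H\times Z^0\to G$, and finiteness of $\Gamma=\ker\phi$ through the injective central projection into $Z(H)$. Those parts are sound. But there are two genuine gaps, both located exactly where you yourself identify the crux, the identification $Z^0\cong(\mathbb{C}^*)^n$. First, the classification you invoke is false as stated: a connected abelian complex Lie group is \emph{not} in general isomorphic to $\mathbb{C}^p\times(\mathbb{C}^*)^q\times A$ with $A$ a \emph{compact} complex torus. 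There exist noncompact Cousin (toroidal) groups $\mathbb{C}^m/\Lambda$, with $\mathrm{rank}\,\Lambda$ strictly between $m$ and $2m$, admitting no such product decomposition. The correct statement (Remmert--Morimoto) is $\mathbb{C}^p\times(\mathbb{C}^*)^q\times X_0$ with $X_0$ toroidal, meaning $X_0$ carries no nonconstant holomorphic functions. The repair is cheap --- your maximum-principle-style argument (a faithful holomorphic representation supplies nonconstant holomorphic matrix coefficients) kills the toroidal factor verbatim --- but as written the lemma you lean on is wrong.

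Second, your elimination of the $\mathbb{C}^p$-factor does not close: Definition~\ref{D:reductive} asserts complete reducibility only for holomorphic representations \emph{of $G$}, and this property does not pass to subgroups, so the non-semisimple two-dimensional representation of a factor $\mathbb{C}\subset Z^0$ is not by itself a contradiction. You must exhibit a non-completely-reducible holomorphic representation of $G$ itself. This can be done: establish finiteness of $\Gamma$ first (your argument for it uses only that $\Gamma\cap(\{1\}\times Z^0)=\{1\}$ and $Z(H)$ is finite, independently of the identification of $Z^0$); then the projection of $H\times Z^0$ onto the hypothetical $\mathbb{C}$-factor annihilates $\Gamma$, since a finite group admits no nontrivial homomorphism into the torsion-free group $(\mathbb{C},+)$, so the unipotent representation $c\mapsto\bigl(\begin{smallmatrix}1&c\\0&1\end{smallmatrix}\bigr)$ composed with this projection descends to $G$ and violates reductivity. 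Without some such descent step the argument is incomplete. A small further slip in the ``if'' direction: tensoring a faithful representation of $H$ with characters of $(\mathbb{C}^*)^n$ need not be faithful (tensor products destroy faithfulness, e.g. $(-I,-1)$ acts trivially on a tensor product); take the direct sum $\rho\oplus\chi_1\oplus\cdots\oplus\chi_n$ instead. Your proposed alternative route --- Proposition~\ref{T:correspondence} plus the structure theorem $K\cong(K_s\times(S^1)^n)/\Delta$ for compact groups, then complexification --- is legitimate within the paper's logic (both facts are quoted from the literature) and would avoid the abelian classification entirely.
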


The following two propositions concern the relations of complex
reductive groups and their maximal compact subgroups. For a compact
Lie group $K$, we denote its complexification by $K_{\mathbb{C}}$.

\begin{proposition}\label{T:correspondence}
The complexification of a compact Lie group is reductive.
Conversely, all maximal compact subgroups of a complex reductive
group is connected, and any two of them are conjugate. Moreover, if
$K$ is a maximal compact subgroup of a complex reductive group $G$,
then $G\cong K_{\mathbb{C}}$.
\end{proposition}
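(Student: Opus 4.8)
The plan is to handle the two directions of the statement separately and to extract the isomorphism $G \cong K_{\mathbb{C}}$ from the analysis of the converse. For the first assertion, let $K$ be a compact Lie group with Lie algebra $\mathfrak{k}$, and recall the polar (Cartan) decomposition of a complexification: $\operatorname{Lie}(K_{\mathbb{C}}) = \mathfrak{k} \oplus i\mathfrak{k}$ and $K \times \mathfrak{k} \to K_{\mathbb{C}}$, $(k,X)\mapsto k\exp(iX)$, is a diffeomorphism. By the Peter--Weyl theorem $K$ has a faithful finite-dimensional representation, which I may assume unitary after averaging a Hermitian inner product over $K$; it extends, via the universal property of the complexification, to a holomorphic representation $\rho$ of $K_{\mathbb{C}}$. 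To see that $\rho$ is faithful I would note that for $X\in\mathfrak{k}$ the operator $d\rho(X)$ is skew-Hermitian, so $\rho(k\exp(iX)) = \rho(k)\exp(i\,d\rho(X))$ is the polar decomposition of $\rho(k\exp(iX))$ in $GL(V)$; uniqueness of polar decomposition together with faithfulness on $K$ then forces the kernel to be trivial. For complete reducibility I would restrict an arbitrary finite-dimensional holomorphic representation to $K$, average an inner product, and invoke Weyl's unitary trick: a complex subspace $W$ invariant under $K$ is automatically $K_{\mathbb{C}}$-invariant, since the stabilizer of $W$ is a closed complex subgroup whose Lie algebra is a complex subspace containing $\mathfrak{k}$, hence contains $i\mathfrak{k}$ and all of $\mathfrak{k}\oplus i\mathfrak{k}$, so equals $K_{\mathbb{C}}$ by connectedness. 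Orthogonal complements then supply invariant complements, so $K_{\mathbb{C}}$ is reductive.

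For the converse I would first invoke the Cartan--Iwasawa--Malcev theorem for connected Lie groups, which guarantees that maximal compact subgroups of $G$ exist, are mutually conjugate, and that $G$ is diffeomorphic to $K\times\mathbb{R}^m$; connectedness of $G$ then forces $K$ to be connected. It therefore suffices to exhibit one maximal compact $K$ with $K_{\mathbb{C}}\cong G$. Using the structure theorem (Proposition \ref{P:structure}), write $G = (H\times(\mathbb{C}^*)^n)/\Gamma$ with $H$ simply connected semisimple and $\Gamma$ finite central. Taking a compact real form $U\subset H$ (so $U_{\mathbb{C}} = H$) and the maximal torus $(S^1)^n\subset(\mathbb{C}^*)^n$, set $\widetilde{K} = U\times(S^1)^n$; then $\widetilde{K}$ is maximal compact in $H\times(\mathbb{C}^*)^n$ and $\widetilde{K}_{\mathbb{C}} = H\times(\mathbb{C}^*)^n$. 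Since the center of $H$ is finite and contained in $U$, and finite-order elements of $\mathbb{C}^*$ lie on $S^1$, I would check $\Gamma\subseteq\widetilde{K}$, so that $K := \widetilde{K}/\Gamma$ is a maximal compact subgroup of $G$.

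The concluding step, and the place I expect the real work, is to prove $G\cong K_{\mathbb{C}}$ by showing that complexification commutes with the finite central quotient, i.e.\ $(\widetilde{K}/\Gamma)_{\mathbb{C}} \cong \widetilde{K}_{\mathbb{C}}/\Gamma = G$; concretely one identifies $K_{\mathbb{C}}$ with $K\exp(i\mathfrak{k})\subseteq G$ through the polar decomposition and verifies that this exhausts $G$ and carries the correct complex-analytic structure. The main obstacle is precisely this global bookkeeping through $\Gamma$: one must confirm $\Gamma\subseteq\widetilde{K}$, that the universal property of the complexification descends through the finite quotient, and that no extra components or periods are introduced. By contrast, the representation-theoretic first direction and the unitary trick are essentially routine.
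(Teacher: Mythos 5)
The paper itself gives no proof of this proposition; it is stated as background, with the reader referred to Hochschild \cite{Ho}. So your argument is necessarily a different route, and as a route it is essentially sound. The first direction is complete as written: the polar decomposition $K_{\mathbb{C}}\cong K\exp(i\mathfrak{k})$, faithfulness via uniqueness of the polar decomposition in $GL(V)$ (note $i\,d\rho(X)$ is Hermitian, so $\exp(i\,d\rho(X))$ is positive definite, and injectivity of $d\rho$ on $\mathfrak{k}$ follows since $\ker\rho$ is trivial, hence discrete), and the unitary trick with the stabilizer-of-$W$ argument (the stabilizer is the preimage of a complex subgroup of $GL(V)$ under a holomorphic homomorphism, its Lie algebra is a complex subspace containing $\mathfrak{k}$, hence all of $\mathfrak{k}\oplus i\mathfrak{k}$) are all correct. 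For the converse, invoking Cartan--Iwasawa--Malcev for existence, conjugacy, and connectedness of maximal compacts is legitimate, and your verification that $\Gamma\subseteq\widetilde{K}$ is right: $Z(H)$ is finite and lies in every maximal compact subgroup of $H$ (if $Z(H)\not\subseteq U$ then $Z(H)U$ would be a strictly larger compact subgroup), and torsion in $\mathbb{C}^*$ lies on $S^1$. Using Proposition \ref{P:structure} is also fair game, since the paper states it independently and beforehand.

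The one place you should be pressed is the step you yourself flag: exhibiting the diffeomorphism $G=K\exp(i\mathfrak{k})$ does \emph{not} by itself prove $G\cong K_{\mathbb{C}}$, because $K_{\mathbb{C}}$ is characterized by a universal property, not by a normal form; a priori some unitary representation of $K$ might fail to extend holomorphically to $G$. The descent of the polar decomposition is fine (if $\tilde{k}_1\exp(iX_1)=\tilde{k}_2\gamma\exp(iX_2)$ with $\gamma\in\Gamma\subseteq\widetilde{K}$, uniqueness upstairs forces $X_1=X_2$ and $\tilde{k}_1\Gamma=\tilde{k}_2\Gamma$), but the clean way to finish is to verify the universal property directly for $\widetilde{K}_{\mathbb{C}}/\Gamma$: since $\Gamma$ is central, finite, and contained in $\widetilde{K}\subseteq\widetilde{K}_{\mathbb{C}}$, holomorphic homomorphisms $\widetilde{K}_{\mathbb{C}}/\Gamma\to L$ correspond bijectively to holomorphic homomorphisms $\widetilde{K}_{\mathbb{C}}\to L$ trivial on $\Gamma$, which by the universal property of $\widetilde{K}_{\mathbb{C}}$ correspond to continuous homomorphisms $\widetilde{K}\to L$ trivial on $\Gamma$, i.e.\ to homomorphisms $K\to L$. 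Hence $(\widetilde{K}/\Gamma)_{\mathbb{C}}\cong\widetilde{K}_{\mathbb{C}}/\Gamma=G$, with $(U\times(S^1)^n)_{\mathbb{C}}=H\times(\mathbb{C}^*)^n$ checked factorwise. Combined with conjugacy of maximal compacts and functoriality of complexification, this gives $G\cong K_{\mathbb{C}}$ for every maximal compact $K$, completing your outline.
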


\begin{proposition}\label{T:representation}
Let $G$ be a complex reductive group, $K$ a maximal compact subgroup
of it. Then every finite-dimensional unitary representation
$\sigma:K\to U(n)$ of $K$ can be uniquely extended to a holomorphic
representation $\sigma_{\mathcal{H}}:G\to GL(n,\mathbb{C})$, and
$\sigma$ is irreducible if and only if $\sigma_{\mathcal{H}}$ is
irreducible.
\end{proposition}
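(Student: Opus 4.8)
The plan is to pass to the Lie algebra level, complexify the differential of $\sigma$, and then integrate. By Proposition~\ref{T:correspondence} I may identify $G$ with the complexification $K_{\mathbb{C}}$, so that the Lie algebra $\mathfrak{g}$ of $G$ is the complexification $\mathfrak{k}\oplus i\mathfrak{k}$ of the Lie algebra $\mathfrak{k}$ of $K$. A continuous homomorphism $\sigma:K\to U(n)$ is automatically smooth, hence has a differential $d\sigma:\mathfrak{k}\to\mathfrak{u}(n)\subset\mathfrak{gl}(n,\mathbb{C})$, a homomorphism of real Lie algebras. First I would extend $d\sigma$ complex-linearly to a map $\psi:\mathfrak{g}\to\mathfrak{gl}(n,\mathbb{C})$; since the brackets on both $\mathfrak{g}$ and $\mathfrak{gl}(n,\mathbb{C})$ are $\mathbb{C}$-bilinear and agree with $d\sigma$ on $\mathfrak{k}$, the map $\psi$ is a homomorphism of complex Lie algebras. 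This $\psi$ is the only possible candidate for the differential of a holomorphic extension, which is what will ultimately yield uniqueness.

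The existence step is to integrate $\psi$ to a holomorphic homomorphism $\sigma_{\mathcal{H}}:G\to GL(n,\mathbb{C})$. On the universal cover $\widetilde{G}$, the map $\psi$ integrates to a homomorphism $\widetilde{\sigma}:\widetilde{G}\to GL(n,\mathbb{C})$, which is holomorphic precisely because $\psi$ is $\mathbb{C}$-linear. To descend it to $G$ I would use the polar decomposition $G=K\exp(i\mathfrak{k})$, which exhibits $K$ as a deformation retract of $G$ and hence gives an isomorphism $\pi_1(K)\xrightarrow{\sim}\pi_1(G)$ induced by inclusion. Comparing $\widetilde{\sigma}$ restricted to $\widetilde{K}$ with the lift of $\sigma$ to $\widetilde{K}$, both have differential $d\sigma$ and $\widetilde{K}$ is simply connected, so they coincide; consequently $\widetilde{\sigma}$ is trivial on the image of $\ker(\widetilde{K}\to K)=\pi_1(K)$, which under the isomorphism above is exactly $\ker(\widetilde{G}\to G)=\pi_1(G)$. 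Therefore $\widetilde{\sigma}$ factors through $G$, producing $\sigma_{\mathcal{H}}$; by construction $\sigma_{\mathcal{H}}|_K$ and $\sigma$ share the differential $d\sigma$ on the connected group $K$, so $\sigma_{\mathcal{H}}|_K=\sigma$. Uniqueness then follows because any holomorphic extension has $\mathbb{C}$-linear differential restricting to $d\sigma$, hence equal to $\psi$, and a homomorphism of the connected group $G$ is determined by its differential.

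For the irreducibility equivalence I would show that $\sigma$ and $\sigma_{\mathcal{H}}$ have exactly the same invariant subspaces. A complex subspace $W\subseteq\mathbb{C}^n$ is $\sigma(K)$-invariant if and only if it is $d\sigma(\mathfrak{k})$-invariant, since $K$ is connected; because $W$ is a complex subspace and $\psi(iX)=i\,d\sigma(X)$, invariance under $\mathfrak{k}$ forces $\psi(iX)W=i\,d\sigma(X)W\subseteq iW=W$, so $W$ is invariant under all of $\mathfrak{g}$; and $\psi(\mathfrak{g})$-invariance is equivalent to $\sigma_{\mathcal{H}}(G)$-invariance because $G$ is connected. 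Thus the lattices of invariant subspaces coincide, and $\sigma$ is irreducible precisely when $\sigma_{\mathcal{H}}$ is.

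The main obstacle is the integration-and-descent step: producing a homomorphism on $G$ itself rather than merely on its universal cover, since $G$ (for example when it contains $\mathbb{C}^*$ factors) need not be simply connected. The crucial input that makes the descent succeed is the topological fact, coming from the polar decomposition, that the inclusion $K\hookrightarrow G$ is a homotopy equivalence, so that $\pi_1(K)\cong\pi_1(G)$; this is exactly what guarantees that the obstruction to descending $\widetilde{\sigma}$ vanishes. Alternatively one could invoke the universal property characterizing $K_{\mathbb{C}}$, or decompose $G$ according to the structure theorem in Proposition~\ref{P:structure} and treat the factors separately, but the deformation-retraction argument handles all cases uniformly.
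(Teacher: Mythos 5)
Your proof is correct, but note that the paper does not actually prove this proposition: it is recalled as a standard fact in Section 2, with the detailed proofs delegated to Hochschild \cite{Ho} and other textbooks, so there is no in-paper argument to compare against. What you have written is essentially the standard proof of the extension theorem, and all three of its steps are sound: the $\mathbb{C}$-linear extension $\psi$ of $d\sigma$ is a homomorphism of complex Lie algebras because the brackets on $\mathfrak{g}=\mathfrak{k}\oplus i\mathfrak{k}$ and $\mathfrak{gl}(n,\mathbb{C})$ are $\mathbb{C}$-bilinear; the integration-and-descent step works because the polar decomposition $G=K\exp(i\mathfrak{k})$ makes $K\hookrightarrow G$ a homotopy equivalence, and surjectivity of $\pi_1(K)\to\pi_1(G)$ is precisely what places $\ker(\widetilde{G}\to G)$ inside the image of $\ker(\widetilde{K}\to K)$, on which $\widetilde{\sigma}$ agrees with the lift of $\sigma$ and hence is trivial (injectivity of $\pi_1(K)\to\pi_1(G)$ is not even needed); and your invariant-subspace comparison correctly exploits that a \emph{complex} subspace invariant under $d\sigma(\mathfrak{k})$ is automatically invariant under $\psi(\mathfrak{g})=d\sigma(\mathfrak{k})+i\,d\sigma(\mathfrak{k})$, with connectedness of $K$ and $G$ converting between group and Lie algebra invariance. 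Two minor remarks: a smooth homomorphism between complex Lie groups with $\mathbb{C}$-linear differential is indeed holomorphic, which justifies your claim about $\widetilde{\sigma}$; and uniqueness of the extension also follows at once from the paper's Proposition \ref{T:extension} applied entrywise to the matrix elements of two holomorphic extensions agreeing on $K$, which gives a shortcut avoiding any appeal to differentials.
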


We denote by $\widehat{K}$ the unitary dual of $K$, and by
$\widehat{G}_{\mathcal{H}}$ the set of equivalence classes of all
finite-dimensional holomorphic irreducible representations of $G$.
By the above proposition, we have a bijection
$\widehat{G}_{\mathcal{H}}\leftrightarrow\widehat{K}
\;([\pi]\leftrightarrow[\pi|_K])$, where $[\cdot]$ denote the
equivalence class of a representation.

The next proposition will be useful in the following sections.

\begin{proposition}\label{T:extension}(\cite{Va}, Lemma 4.11.13)
Let $G$ be a complex reductive group, $K$ a maximal compact subgroup
of it. Suppose $f_1, f_2$ are holomorphic functions on $G$. If
$f_1(x)=f_2(x)$ for all $x\in K$, then $f_1=f_2$ on $G$.
\end{proposition}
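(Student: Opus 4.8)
The plan is to reduce to showing that a holomorphic function vanishing on $K$ vanishes identically, and then to exploit the fact that $K$ sits inside $G$ as a \emph{totally real} submanifold of maximal dimension. Set $f=f_1-f_2$; this is holomorphic on $G$ and vanishes on $K$, so it suffices to prove $f\equiv 0$. By Proposition~\ref{T:correspondence} we have $G\cong K_{\mathbb{C}}$, and hence at the level of Lie algebras $\mathfrak{g}=\mathfrak{k}\oplus i\mathfrak{k}$, where $\mathfrak{k}$ is the Lie algebra of $K$. In particular $\mathfrak{k}$ is a real form of $\mathfrak{g}$, with $\dim_{\mathbb{R}}\mathfrak{k}=\dim_{\mathbb{C}}\mathfrak{g}=:n$.

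First I would work in a neighborhood of the identity. Fixing a real basis $X_1,\dots,X_n$ of $\mathfrak{k}$, this same family is a complex basis of $\mathfrak{g}$, which gives an identification $\mathfrak{g}\cong\mathbb{C}^n$ carrying $\mathfrak{k}$ onto the real subspace $\mathbb{R}^n$. The holomorphic exponential map $\exp\colon\mathfrak{g}\to G$ is a biholomorphism from a neighborhood $U$ of $0$ onto a neighborhood of $e$, and it carries a neighborhood of $0$ in $\mathfrak{k}$ onto a neighborhood of $e$ in $K$. Consequently $F:=f\circ\exp$ is holomorphic on $U\subset\mathbb{C}^n$ and vanishes on $U\cap\mathbb{R}^n$.

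The key step, where the totally real structure does its work, is the assertion that a holomorphic function on a neighborhood of $0$ in $\mathbb{C}^n$ that vanishes on $\mathbb{R}^n$ must vanish identically near $0$. This follows from the one-variable identity theorem applied one coordinate at a time, or equivalently from the observation that the Taylor coefficients of $F$ at $0$ are determined by its derivatives along the real directions, all of which vanish. Thus $F\equiv 0$ on a neighborhood of $0$, so $f\equiv 0$ on a neighborhood of $e$ in $G$.

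Finally, since $G$ is connected (all groups here are assumed connected) and $f$ vanishes on a nonempty open set, the identity theorem for holomorphic functions on a connected complex manifold forces $f\equiv 0$ on all of $G$, that is $f_1=f_2$. The only genuinely delicate point is the local vanishing statement in $\mathbb{C}^n$; everything else is a matter of correctly translating ``agreement on $K$'' into ``vanishing on the real points of a holomorphic chart'' by means of the exponential map and the real-form decomposition $\mathfrak{g}=\mathfrak{k}\oplus i\mathfrak{k}$.
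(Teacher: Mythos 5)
Your argument is correct. Note, though, that the paper itself offers no proof of this proposition: it is quoted from Varadarajan (Lemma 4.11.13), and your argument is essentially the standard one behind that citation --- $K$ is a real form of $G\cong K_{\mathbb{C}}$, so $\mathfrak{k}$ is totally real of maximal dimension in $\mathfrak{g}=\mathfrak{k}\oplus i\mathfrak{k}$; the holomorphic exponential chart turns vanishing on $K$ near $e$ into vanishing of $F=f\circ\exp$ on $U\cap\mathbb{R}^n$; the Taylor-coefficient argument (all real-direction derivatives vanish, and for holomorphic $F$ these equal the complex partials) gives $F\equiv 0$ near $0$; and the identity theorem on the connected manifold $G$ finishes. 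Each step checks out: $\exp$ maps $\mathfrak{k}$ into $K$ because one-parameter subgroups with generator in $\mathfrak{k}$ lie in $K$, the paper's standing connectedness assumption covers the final step, and your alternative one-variable-at-a-time justification also works provided you shrink $U$ to a polydisc so that each coordinate slice is a disc meeting $\mathbb{R}$ in an interval --- a point worth making explicit, but the Taylor-coefficient route you also give avoids it entirely.
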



\section{$K$-admissible measures}

In this section we introduce the notion of $K$-admissible measures
on complex reductive groups, and prove that $K$-admissible measures
are abundant enough. To do this, we first consider a class of
measures on general Lie groups.

\begin{definition}\label{D:measure}
Let $G$ be a Lie group. A measure $d\mu$ on $G$ is \emph{tame} if it
has the form $d\mu=\mu dg$, where $dg$ is a (left) Haar measure on
$G$, $\mu$ is a measurable function on $G$ which is locally bounded
from below, in the sense that, for every $x\in G$, there is a
$\delta>0$ and a neighborhood $U$ of $x$ such that
$\mu(y)\geq\delta$ for almost all $y\in U$ (with respect to $dg$).
\end{definition}

Let $G$ be a complex Lie group. Denote the space of continuous
functions on $G$ by $C(G)$. For $g\in G$, we define the \emph{right
action $R_g$} and the \emph{left action $L_g$} of $g$ on $C(G)$ by
$$(R_gf)(h)=f(hg),$$
$$(L_gf)(h)=f(g^{-1}h),$$ where $f\in C(G), h\in G$.

\begin{lemma}\label{L:existence}
Let $G$ be a Lie group. Suppose $\mathcal{F}$ is a countable subset
of $C(G)$. Then there exists a tame measure $d\mu$ on $G$ such that
for all $g_1, g_2\in G, f\in\mathcal{F}$, we have
$R_{g_1}L_{g_2}f\in L^{2}(G,d\mu)$.
\end{lemma}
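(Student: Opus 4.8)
The plan is to reduce the uncountable family of integrability conditions (one for each pair $(g_1,g_2)\in G\times G$ and each $f\in\mathcal{F}$) to a countable family, and then to construct $\mu$ by a diagonalization that forces it to decay rapidly at infinity while remaining locally bounded from below. First I would record that a connected Lie group $G$ is $\sigma$-compact: taking a relatively compact symmetric neighborhood $U$ of the identity, the open subgroup $\bigcup_n U^n$ is closed and hence equals $G$ by connectedness, so $G=\bigcup_n\overline{U}^{\,n}$ is a countable union of compacta. From this I obtain an exhaustion $K_1\subseteq K_2\subseteq\cdots$ by compact sets with $K_n\subseteq\mathrm{int}\,K_{n+1}$ and $\bigcup_n K_n=G$, and I set $A_1=K_1$, $A_n=K_n\setminus K_{n-1}$, so that the shells $A_n$ partition $G$ and each has finite Haar measure $v_n:=\int_{A_n}dg<\infty$.

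The key device for the reduction is, for a compact set $C\subseteq G$ and $f\in\mathcal F$, the majorant
$$M_{f,C}(h)=\sup_{g_1,g_2\in C}\bigl|f(g_2^{-1}hg_1)\bigr|=\sup_{x\in C^{-1}hC}|f(x)|.$$
Since $C^{-1}hC$ is compact, $M_{f,C}(h)$ is finite for every $h$; and because $\bigcup_{h\in N}C^{-1}hC=C^{-1}NC$ is compact for any compact neighborhood $N$, the function $M_{f,C}$ is locally bounded. It is measurable, being the supremum of the countable family $\{\,h\mapsto|f(g_2^{-1}hg_1)|:(g_1,g_2)\in D\,\}$ of continuous functions, where $D$ is a countable dense subset of $C\times C$. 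The point of this construction is that whenever $g_1,g_2\in C$ one has the \emph{uniform} pointwise bound $|(R_{g_1}L_{g_2}f)(h)|=|f(g_2^{-1}hg_1)|\le M_{f,C}(h)$. Hence, fixing the countable exhausting family $C_m=K_m$ and enumerating $\mathcal F=\{f_1,f_2,\dots\}$, it suffices to build a single tame $\mu$ with $\int_G M_{f_k,K_m}(h)^2\,\mu(h)\,dg(h)<\infty$ for all $k,m$, because any pair $(g_1,g_2)$ lies in some $K_m\times K_m$ and any $f\in\mathcal F$ equals some $f_k$.

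This is now a countable problem, solved by diagonalization. Re-indexing the countably many majorants as $\Phi_j:=M_{f_{k(j)},K_{m(j)}}^{2}$, each $\Phi_j$ is measurable and locally bounded, so $b_{j,n}:=\sup_{A_n}\Phi_j<\infty$ by compactness of $\overline{A_n}$. I will define $\mu$ to be constant on each shell, $\mu\equiv c_n>0$ on $A_n$, with
$$c_n=\frac{2^{-n}}{1+v_n\max_{1\le j\le n}b_{j,n}}.$$
Then for each fixed $j$ the tail $\sum_{n\ge j}c_n v_n b_{j,n}\le\sum_{n\ge j}2^{-n}<\infty$ while the head is a finite sum, so $\int_G\Phi_j\,\mu\,dg=\sum_n c_n\int_{A_n}\Phi_j\,dg\le\sum_n c_n v_n b_{j,n}<\infty$, as required. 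Finally $\mu$ is tame: it is positive and, since every point has a compact neighborhood contained in some $K_N$ and therefore meeting only the shells $A_1,\dots,A_N$, on that neighborhood $\mu\ge\min\{c_1,\dots,c_N\}>0$, so $\mu$ is locally bounded from below.

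The only genuinely delicate point is the passage from the uncountable to the countable set of conditions; everything after the introduction of the majorants $M_{f,C}$ is routine. I would therefore expect the main obstacle to be verifying cleanly that $M_{f,C}$ is measurable and locally bounded (so that the $b_{j,n}$ are finite and the diagonalization is legitimate), and making sure the reduction really does cover \emph{every} pair $(g_1,g_2)$ and not merely a dense set of them --- which is exactly why the uniform bound over all of $C$, rather than a pointwise bound, is essential.
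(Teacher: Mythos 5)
Your proof is correct, and it differs from the paper's in its key mechanism, even though both end with the same kind of diagonal construction: a density that is constant on the shells of a compact exhaustion and decays fast enough to dominate countably many locally bounded functions. The paper chooses a left-invariant metric on $G$, takes $K_n$ to be the closed metric balls, and controls two-sided translates \emph{geometrically}: subadditivity gives $d(e,h_2^{-1}gh_1)\le d(e,g)+N$, hence $h_2^{-1}K_nh_1\subseteq K_{n+N}$, so a single measure built with the decay condition $a_nM_{2n}|K_n\setminus K_{n-1}|\le 2^{-n}$ can be verified against each pair $(h_1,h_2)$ \emph{after} the construction. You instead control translates \emph{analytically, before} constructing the measure: the majorants $M_{f,C}(h)=\sup_{x\in C^{-1}hC}|f(x)|$ convert the uncountable family of conditions into the countably many integrability requirements $\int_G M_{f_k,K_m}^2\,d\mu<\infty$, against which you diagonalize. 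Your route buys two things: it needs no invariant structure on the exhaustion, so it works verbatim on any $\sigma$-compact locally compact group; and it sidesteps a point the paper leaves tacit, namely that the closed balls of the chosen left-invariant metric must be compact (the metric must be proper) --- without that, the paper's $M_n$ and $|K_n\setminus K_{n-1}|$ need not be finite, whereas your $b_{j,n}$ are finite by construction. What the paper's metric approach buys in return is a single explicit estimate chain with no need for the measurability discussion of the majorants; you handle that point correctly by writing $M_{f,C}$ as a supremum over a countable dense subset of $C\times C$ of continuous functions of $h$, hence Borel (indeed $M_{f,C}$ is even continuous, by uniformity of $f$ on the compact sets $C^{-1}NC$), and your observation that the uniform bound over all of $C$, not merely a pointwise bound on a dense set of pairs, is what makes the reduction legitimate is exactly the right delicate point to flag.
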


\begin{proof}
Let $\mathcal{F}=\{f_1, f_2, \cdots\}$. Choose a left-invariant
metric $d(\cdot,\cdot)$ on $G$. Then for $g,h\in G$, we have
\begin{equation}\label{E:inequality}
d(e,gh)\leq d(e,g)+d(g,gh)=d(e,g)+d(e,h).
\end{equation}
Let $$K_n=\{g\in G:d(e,g)\leq n\}$$ for each $n\in\mathbb{N}$, and
denote $$M_n=\max\{|f_k(g)|^2:g\in K_n, 1\leq k\leq n\}.$$ For each
$n\in\mathbb{N}$, choose $a_n>0$ such that $$a_n M_{2n}
|K_n\backslash K_{n-1}|\leq\frac{1}{2^n},$$ where $|S|$ denotes the
measure of a subset $S\subset G$ with respect to a fixed Haar
measure $dg$ on $G$. We may also assume that $a_{n+1}\leq a_n$ for
each $n$. Define $\mu(g)=a_n$ when $g\in K_n\backslash K_{n-1}$, and
let $d\mu(g)=\mu(g)dg$. It is obvious that $d\mu$ is a tame measure.
We claim that $d\mu$ satisfies the conclusion of the lemma. In fact,
for any pair $h_1,h_2\in G$, choose a positive integer $N$ such that
$\max\{d(e,h_1), d(e,h_2^{-1})\}\leq \frac{N}{2}$. By the inequality
\eqref{E:inequality}, we have
$$d(e,h_2^{-1}gh_1)\leq d(e,g)+N.$$ This
means that $h_2^{-1}K_nh_1\subset K_{n+N}$ for each
$n\in\mathbb{N}$. Now for each $n\in\mathbb{N}$, we have
\begin{align*}
&\int_G |(R_{h_1}L_{h_2}f_n)(g)|^2\,d\mu(g)\\
=&\int_{K_{N-1}}|f_n(h_2^{-1}gh_1)|^2\mu(g)\,dg+\sum_{n=N}^{+\infty}
\int_{K_{n}\backslash K_{n-1}}|f_n(h_2^{-1}gh_1)|^2\mu(g)\,dg\\
\leq&\int_{K_{N-1}}M_{2N-1}a_1\,dg+\sum_{n=N}^{+\infty}
\int_{K_{n}\backslash K_{n-1}}M_{n+N}a_n\,dg\\
=&M_{2N-1}a_1|K_{N-1}|+\sum_{n=N}^{+\infty}
M_{n+N}a_n|K_{n}\backslash K_{n-1}|\\
\leq&M_{2N-1}a_1|K_{N-1}|+\sum_{n=N}^{+\infty}
M_{2n}a_n|K_{n}\backslash K_{n-1}|\\
\leq&M_{2N-1}a_1|K_{N-1}|+\sum_{n=N}^{+\infty}\frac{1}{2^n}\\
<&+\infty.
\end{align*}
So $R_{h_1}L_{h_2}f_n\in\mathcal{H}L^{2}(G,d\mu)$. This proves the
lemma.
\end{proof}

Now let $G$ be a complex reductive group. Let $\mathcal{H}(G)$
denote the Fr\'{e}chet space of holomorphic functions on $G$, with
respect to the semi-norms $p_X(f)=\sup_{g\in X}|f(g)|$, where $X$ is
any compact subset of $G$. Convergence of a sequence in
$\mathcal{H}(G)$ with respect to the Fr\'echet topology is
equivalent to locally uniform convergence. For a tame measure $d\mu$
on $G$, let $\mathcal{H}L^{2}(G,d\mu)$ denote the space of
$L^2$-$d\mu$-integrable functions in $\mathcal{H}(G)$, with the
inner product
$$
\langle f_{1},f_{2}\rangle=\int_{G}f_{1}(g)\overline{f_{2}(g)}\,
d\mu(g).
$$
If a sequence $f_1, f_2, \cdots$ in $\mathcal{H}L^{2}(G,d\mu)$
$L^2$-converges to a measurable function $f$, then it also converges
locally uniformly to $f$. This implies that
$f\in\mathcal{H}L^{2}(G,d\mu)$. Hence $\mathcal{H}L^{2}(G,d\mu)$ is
a Hilbert space.

Let $K$ be a maximal compact subgroup of $G$. For a
finite-dimensional holomorphic representation $\pi$ of $G$ with
representation space $V_{\pi}$ of dimension $d_\pi$, we always
choose a fixed inner product $\langle\cdot,\cdot\rangle$ on
$V_{\pi}$ such that the restriction of $\pi$ to $K$ is unitary. We
also choose a fixed orthonormal basis $\{e_{j}\}$ of $V_{\pi}$ with
respect to this inner product, and let
$$
\pi_{ij}(g)=\langle\pi(g)e_{j},e_{i}\rangle
$$
be the matrix elements. For an $n\times n$ matrix $A$, we let
$\|A\|^{2}=\mathrm{tr}(AA^{*})=\sum_{i,j=1}^{n}|A_{ij}|^{2}$. Then
we define
$$
C_{\pi,\mu}=\int_{G}\|\pi(g)\|^{2}\, d\mu(g)
$$
for a tame measure $d\mu$ on $G$.

\begin{definition}\label{D:measure}
Let the notations be as above. A tame measure $d\mu$ on $G$ is
\emph{$K$-admissible} if it is $K$-bi-invariant and such that
$C_{\pi,\mu}<+\infty$ for each $[\pi]\in\widehat{G}_{\mathcal{H}}$.
\end{definition}

\begin{theorem}\label{T:existence}
Let $G$ be a complex reductive group, $\mathcal{F}$ be a countable
subset of $\mathcal{H}(G)$. Then for any maximal compact subgroup
$K$ of $G$, there exists a $K$-admissible measure $d\mu$ such that
for all $g_1,g_2\in G, f\in\mathcal{F}$, we have
$R_{g_1}L_{g_2}f\in\mathcal{H}L^{2}(G,d\mu)$. In particular, we have
$\mathcal{F}\subset\mathcal{H}L^{2}(G,d\mu)$.
\end{theorem}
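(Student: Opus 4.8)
The plan is to reduce everything to Lemma~\ref{L:existence}, enlarging $\mathcal{F}$ so that the enlarged family also forces the finiteness of the constants $C_{\pi,\mu}$, and then repairing the lack of $K$-bi-invariance by averaging the resulting tame measure over $K\times K$.

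First I would enlarge $\mathcal{F}$. By the bijection $\widehat{G}_{\mathcal{H}}\leftrightarrow\widehat{K}$ and the countability of the unitary dual of a compact Lie group, $\widehat{G}_{\mathcal{H}}$ is countable; fixing for each class a representative $\pi$ together with its chosen orthonormal basis, the collection of all matrix elements $\{\pi_{ij}\}$ is a countable subset of $\mathcal{H}(G)\subset C(G)$. Let $\mathcal{F}'$ be the (still countable) union of $\mathcal{F}$ with this collection, and apply Lemma~\ref{L:existence} to obtain a tame measure $d\mu_0=\mu_0\,dg$ with $R_{h_1}L_{h_2}f\in\mathcal{H}L^2(G,d\mu_0)$ for every $f\in\mathcal{F}'$ and all $h_1,h_2\in G$. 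Since $\|\pi(g)\|^2=\sum_{i,j}|\pi_{ij}(g)|^2$ is a finite sum, this already gives $C_{\pi,\mu_0}=\sum_{i,j}\|\pi_{ij}\|_{L^2(d\mu_0)}^2<+\infty$ for every $[\pi]$. The only defect of $d\mu_0$ is that it need not be $K$-bi-invariant.

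To fix this I would set
$$\mu(g)=\int_K\int_K \mu_0(k_1 g k_2)\,dk_1\,dk_2,\qquad d\mu=\mu\,dg,$$
with $dk$ normalized Haar measure on $K$. Invariance of $dk$ gives $\mu(k_1'gk_2')=\mu(g)$, so $d\mu$ is $K$-bi-invariant. For tameness, recall that in Lemma~\ref{L:existence} the function $\mu_0$ equals $a_n$ on $K_n\setminus K_{n-1}$ with $a_n$ decreasing, so $\mu_0\geq a_N>0$ on the compact ball $K_N$; given $x\in G$, choose $N$ with $KxK\subset\mathrm{int}(K_N)$ and then a neighborhood $U$ of $x$ with $k_1yk_2\in K_N$ for all $y\in U$, $k_1,k_2\in K$, whence $\mu\geq a_N$ on $U$. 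Thus $d\mu$ is a $K$-admissible candidate once the $L^2$-integrability of translates is verified.

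The crux is the following computation. For $f\in\mathcal{F}'$ and $g_1,g_2\in G$, Tonelli's theorem followed by the substitution $h=k_1 g k_2$ (which preserves $dg$, since $\Delta|_K\equiv 1$) gives
$$\|R_{g_1}L_{g_2}f\|_{L^2(d\mu)}^2=\int_K\int_K \big\|R_{k_2^{-1}g_1}\,L_{k_1 g_2}f\big\|_{L^2(d\mu_0)}^2\,dk_1\,dk_2 .$$
Each integrand is finite by Lemma~\ref{L:existence}, but to integrate over $K\times K$ I must bound it uniformly, and this is where I expect the real work: the estimate in the proof of Lemma~\ref{L:existence} bounds $\|R_{h_1}L_{h_2}f\|_{L^2(d\mu_0)}^2$ in terms only of an integer $N$ with $\max\{d(e,h_1),d(e,h_2^{-1})\}\le N/2$, hence uniformly as $(h_1,h_2)$ ranges over a compact subset of $G\times G$. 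Applying this to the compact set $(h_1,h_2)\in Kg_1\times Kg_2$ produces a single bound valid for all $(k_1,k_2)$, so the double integral is finite and $R_{g_1}L_{g_2}f\in\mathcal{H}L^2(G,d\mu)$. Letting $f$ run over the matrix elements and taking $g_1=g_2=e$ preserves $C_{\pi,\mu}<+\infty$, so $d\mu$ is $K$-admissible; letting $f\in\mathcal{F}$ gives the stated integrability of all translates, and $g_1=g_2=e$ gives $\mathcal{F}\subset\mathcal{H}L^2(G,d\mu)$.
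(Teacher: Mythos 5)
Your proposal is correct and follows essentially the same route as the paper: enlarge $\mathcal{F}$ by the countably many matrix elements $\pi_{ij}$, apply Lemma \ref{L:existence}, average the resulting tame measure over $K\times K$, and reduce via Tonelli and the substitution $h=k_1gk_2$ to the finiteness of $\int_K\int_K\bigl\|R_{k_2^{-1}g_1}L_{k_1g_2}f\bigr\|_{L^2(d\mu_0)}^2\,dk_1\,dk_2$, exactly as in the paper's claim for $d\mu(g)=\bigl(\int_K\int_K\nu(xgy)\,dx\,dy\bigr)dg$. The only divergence is at the final step, where the paper disposes of the $K\times K$ integral by asserting continuity of the inner integral in $(x,y)$ as a ``standard analytical argument,'' while you instead extract a uniform bound over the compact set $Kg_1\times Kg_2$ from the explicit estimate inside the proof of Lemma \ref{L:existence} — an equally valid justification (and your explicit verification that the averaged measure is still tame fills in a detail the paper merely asserts).
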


\begin{proof}
Applying Lemma \ref{L:existence} to the countable set of holomorphic
functions
$\mathcal{F}'=\mathcal{F}\bigcup\{\pi_{ij}:i,j=1,\cdots,d_{\pi},
[\pi]\in\widehat{G}_{\mathcal{H}}\}$, we get a tame measure $d\nu$
on $G$ such that $R_{g_1}L_{g_2}\pi_{ij},
R_{g_1}L_{g_2}f\in\mathcal{H}L^{2}(G,d\nu)$ for all $g_1,g_2\in G,
f\in\mathcal{F}$. Suppose $d\nu(g)=\nu(g)dg$, and let
$$\mu(g)=\int_K\int_K\nu(xgy)\,dxdy,$$ where $dx, dy$ refer to the
Haar measure on $K$. Then $\mu(g)$ is a $K$-bi-invariant function on
$G$ locally bounded from below, and then the measure
$d\mu(g)=\mu(g)dg$ is a $K$-bi-invariant tame measure. We claim that
if a function $f\in\mathcal{H}(G)$ such that
$R_{x_1}L_{x_2}f\in\mathcal{H}L^{2}(G,d\nu)$ for all $x_1,x_2\in K$,
then $f\in\mathcal{H}L^{2}(G,d\mu)$. In fact,
\begin{align*}
&\int_G |f(g)|^2\,d\mu(g)\\
=&\int_G |f(g)|^2\mu(g)\,dg\\
=&\int_G\int_K\int_K |f(g)|^2\nu(xgy)\,dxdydg\\
=&\int_K\int_K \left(\int_G |f(x^{-1}gy^{-1})|^2\nu(g)\,dg\right)\,dxdy\\
=&\int_K\int_K \left(\int_G
|(R_{y^{-1}}L_{x}f)(g)|^2\,d\nu(g)\right)\,dxdy.
\end{align*}
By the assumption, $\int_G
|(R_{y^{-1}}L_{x}f)(g)|^2\,d\nu(g)<+\infty$, and a standard
analytical argument shows that it is continuous in $(x,y)$. So
$\int_G |f(g)|^2\,d\mu(g)<+\infty$, that is,
$f\in\mathcal{H}L^{2}(G,d\mu)$. Applying this fact to the functions
$\pi_{ij}, R_{g_1}L_{g_2}f \ (f\in\mathcal{F})$, we get
$\pi_{ij}\in\mathcal{H}L^{2}(G,d\mu)$ (which means that
$C_{\pi,\mu}<+\infty$) and
$R_{g_1}L_{g_2}f\in\mathcal{H}L^{2}(G,d\mu)$. So the measure $d\mu$
is $K$-admissible and satisfies the conclusion of the theorem.
\end{proof}


\section{Theorems of Peter-Weyl-type}

The classical Peter-Weyl Theorems claim that the linear span of
matrix elements of irreducible representations of a compact group is
uniformly dense in the Banach space continuous functions on the
group, and is $L^2$-dense in the Hilbert space of $L^2$-integrable
functions, which are the starting point of harmonic analysis on
compact groups. We prove in this section the similar results for
complex reductive groups, which are the base of holomorphic harmonic
analysis on such groups. The $L^2$ analog is due to Hall \cite{Ha},
Theorems 9 and 10. But our proof of the completeness part of the
$L^2$ Peter-Weyl-type theorem is shorter, which makes use of
representation theory of compact groups.

Roughly speaking, the $L^2$ Peter-Weyl-type theorem claim that
certain regular representation of the complex reductive group is
completely reducible. We first give the precise definition.

\begin{definition}\label{D:regular}
Let $G$ be a complex Lie group with a tame measure $d\mu$. The
\emph{right} and \emph{left $L^2$ holomorphic regular
representations} $\pi_{\mu,R}$ and $\pi_{\mu,L}$ of $G$ on
$\mathcal{H}L^{2}(G,d\mu)$ are defined by
$$
(\pi_{\mu,R}(g)f)(h)=f(hg)
$$
and
$$
(\pi_{\mu,L}(g)f)(h)=f(g^{-1}h)
$$
respectively, where $f\in\mathcal{H}L^{2}(G,d\mu), \; g,h\in G$. For
an element $g\in G$, the domains of $\pi_{\mu,R}(g)$ and
$\pi_{\mu,L}(g)$ are
$$
\mathcal{D}(\pi_{\mu,R}(g))=\{f\in\mathcal{H}L^{2}(G,d\mu):\int_{G}|f(hg)|^{2}\,d\mu(h)<+\infty\}
$$
and
$$
\mathcal{D}(\pi_{\mu,L}(g))=\{f\in\mathcal{H}L^{2}(G,d\mu):
\int_{G}|f(g^{-1}h)|^{2}\,d\mu(h)<+\infty\}
$$
respectively.
\end{definition}

\begin{remark}
One should notice here that $\pi_{\mu,R}(g)$ and $\pi_{\mu,L}(g)$
are unbounded operators in general, that is, their domains are not
necessarily the whole space $\mathcal{H}L^{2}(G,d\mu)$. But if some
$g\in G$ such that $(r_g)_*d\mu=d\mu$ (or $(l_g)_*d\mu=d\mu$), then
$\mathcal{D}(\pi_{\mu,R}(g))$ (or $\mathcal{D}(\pi_{\mu,L}(g))$) is
the whole $\mathcal{H}L^{2}(G,d\mu)$. If $G$ is complex reductive
and $d\mu$ is $K$-admissible, we will show that there is a dense
subspace $\mathcal{E}$ of $\mathcal{H}L^{2}(G,d\mu)$ such that
$\mathcal{E}\subset\mathcal{D}(\pi_{\mu,R}(g))\cap
\mathcal{D}(\pi_{\mu,L}(g))$ for all $g\in G$.
\end{remark}

Let $G$ be a complex reductive group, $K$ a maximal compact subgroup
of it. For a finite-dimensional holomorphic representation $\pi$ of
$G$ with representation space $V_{\pi}$ of dimension $d_\pi$, we
denote the linear span of the matrix elements
$\{\pi_{ij}:i,j=1,\cdots,d_{\pi}\}$ of $\pi$ by $\mathcal{E}_{\pi}$,
and let $\mathcal{E}$ be the linear span of
$\{\mathcal{E}_{\pi}:[\pi]\in\widehat{G}_{\mathcal{H}}\}$. Note that
for a tame measure $\mu$ on $G$,
$C_{\pi,\mu}=\int_{G}\|\pi(g)\|^{2}\, d\mu(g)<+\infty$ if and only
if $\mathcal{E}_{\pi}\subset\mathcal{H}L^{2}(G,d\mu)$. So if $d\mu$
is $K$-admissible, then
$\mathcal{E}\subset\mathcal{H}L^{2}(G,d\mu)$. We let
$\widetilde{\pi}(g)=\pi(g^{-1})^{t}$, where $A^{t}$ denotes the
transpose of a matrix $A$. Then $\widetilde{\pi}$ is also a
holomorphic representation of $G$, and $\widetilde{\pi}$ is
irreducible if and only if $\pi$ is irreducible.

\begin{theorem}\label{T:Peter-Weyl}
Let $G$ be a complex reductive group with a maximal compact subgroup
$K$ and a $K$-admissible measure $d\mu$. Then we have
{\flushleft(i)}
$\mathcal{H}L^{2}(G,d\mu)=\bigoplus_{[\pi]\in\widehat{G}_{\mathcal{H}}}\mathcal{E}_{\pi}$,
and the set
$$\{\frac{d_{\pi}}{\sqrt{C_{\pi,\mu}}}\pi_{ij}:i,j=1,\cdots,d_{\pi},
[\pi]\in\widehat{G}_{\mathcal{H}} \}$$ is an orthonormal basis of
$\mathcal{H}L^{2}(G,d\mu)$. {\flushleft(ii)} For each $g\in G$, the
domains $\mathcal{D}(\pi_{\mu,R}(g))$ and
$\mathcal{D}(\pi_{\mu,L}(g))$ contain the linear span $\mathcal{E}$
of $\{\mathcal{E}_{\pi}:[\pi]\in\widehat{G}_{\mathcal{H}}\}$, and
hence are dense in $\mathcal{H}L^{2}(G,d\mu)$. {\flushleft(iii)}
Both $\pi_{\mu,R}(g)$ and $\pi_{\mu,L}(g)$ are completely reducible
and can be splited as direct sums of elements of
$\widehat{G}_{\mathcal{H}}$, each
$[\pi]\in\widehat{G}_{\mathcal{H}}$ occurs with multiplicity
$d_{\pi}$. {\flushleft(iv)} For $1\leq i\leq d_\pi$, the subspace of
$\mathcal{E}_{\pi}$ (resp. $\mathcal{E}_{\widetilde{\pi}}$) spanned
by the $i$-th row (resp. the $i$-th column) of the matrix
$(\pi_{ij})$ (resp. $(\widetilde{\pi}_{ij})$) is invariant under
$\pi_{\mu,R}$ (resp. $\pi_{\mu,L}$), and the restriction of
$\pi_{\mu,R}$ (resp. $\pi_{\mu,L}$) to this subspace is equivalent
to $\pi$.
\end{theorem}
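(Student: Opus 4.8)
The plan is to derive all four statements from two translation formulas for matrix elements together with the classical Peter--Weyl theorem for the compact group $K$. First I would record the elementary computation that for a holomorphic representation $\pi$ and any $g\in G$,
$$R_g\pi_{ij}=\sum_k\pi_{kj}(g)\,\pi_{ik},\qquad L_g\pi_{ij}=\sum_k\pi_{ik}(g^{-1})\,\pi_{kj},$$
obtained by expanding $\pi(hg)=\pi(h)\pi(g)$ and $\pi(g^{-1}h)=\pi(g^{-1})\pi(h)$ in the chosen orthonormal basis. These formulas show at once that each $\mathcal{E}_\pi$ is invariant under every $\pi_{\mu,R}(g)$ and $\pi_{\mu,L}(g)$, that the $i$-th row $\{\pi_{ij}\}_j$ is $\pi_{\mu,R}$-invariant with the action expressed in this basis by the matrix $\pi(g)$, and, after the analogous computation for $\widetilde{\pi}$, that the $i$-th column of $(\widetilde{\pi}_{ij})$ is $\pi_{\mu,L}$-invariant with action again $\pi(g)$; this is exactly statement (iv). In particular $R_g$ and $L_g$ map the finite linear combinations in $\mathcal{E}$ back into $\mathcal{E}\subseteq\mathcal{H}L^2(G,d\mu)$, so $\mathcal{E}\subseteq\mathcal{D}(\pi_{\mu,R}(g))\cap\mathcal{D}(\pi_{\mu,L}(g))$ for every $g$, which gives the inclusion asserted in (ii).

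For the orthogonality relations in (i) I would exploit the $K$-bi-invariance of $d\mu$. Since $\int_G F(xgy)\,d\mu(g)=\int_G F(g)\,d\mu(g)$ for $x,y\in K$, the inner product $\langle\pi_{ij},\sigma_{kl}\rangle$ is unchanged if its integrand is replaced by an arbitrary $K\times K$-translate, and hence equals its average over $x,y\in K$ against the normalized Haar measure of $K$. Expanding $\pi_{ij}(xgy)$ and $\sigma_{kl}(xgy)$ by the homomorphism property and carrying out the $x$- and $y$-integrations, the Schur orthogonality relations for the irreducible unitary representations $\pi|_K$ and $\sigma|_K$ (which are irreducible, and inequivalent precisely when $\pi,\sigma$ are, by Proposition \ref{T:representation}) collapse the resulting sums and leave
$$\langle\pi_{ij},\sigma_{kl}\rangle=\delta_{[\pi][\sigma]}\,\delta_{ik}\delta_{jl}\,\frac{C_{\pi,\mu}}{d_\pi^2}.$$
This simultaneously proves $\mathcal{E}_\pi\perp\mathcal{E}_\sigma$ for $[\pi]\neq[\sigma]$ and that $\{(d_\pi/\sqrt{C_{\pi,\mu}})\pi_{ij}\}$ is orthonormal.

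The substance of the theorem, and where I expect the real work to lie, is the completeness half of (i): that $\mathcal{E}$ is dense. Here I would use right translation by $K$, which by $K$-bi-invariance is a unitary representation of the compact group $K$ on $\mathcal{H}L^2(G,d\mu)$, and decompose the space into its $K$-isotypic components $H_\tau$, $[\tau]\in\widehat{K}$. Given $f\in\mathcal{H}L^2(G,d\mu)$ it suffices to show each isotypic projection $f_\tau$ lies in $\mathcal{E}_\pi$, where $\pi=\tau_{\mathcal{H}}$ is the holomorphic extension of $\tau$. I would form $W=\mathrm{span}\{R_yf_\tau:y\in K\}$; since $f_\tau$ lives in a single isotypic component, $W$ is a finite direct sum of copies of $\tau$, hence finite-dimensional, consisting of holomorphic functions and invariant under right $K$-translation. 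By Proposition \ref{T:representation} the unitary $K$-representation $y\mapsto R_y|_W$ extends to a holomorphic representation $\rho$ of $G$ on $W$, and the key step is to identify $\rho(g)$ with $R_g|_W$: for fixed $w\in W$ and $h\in G$, both $(\rho(g)w)(h)$ and $w(hg)$ are holomorphic in $g$ and agree for $g\in K$, so Proposition \ref{T:extension} forces $(\rho(g)w)(h)=w(hg)$ for all $g\in G$. Thus $W$ is invariant under all right translations, $\rho$ is the right-translation representation on $W$, and by uniqueness of the holomorphic extension $\rho$ is a sum of copies of $\pi$. Finally every $w\in W$ satisfies $w(g)=(R_gw)(e)=(\rho(g)w)(e)$, a matrix coefficient of $\rho$ and therefore an element of $\mathcal{E}_\pi$; in particular $f_\tau\in W\subseteq\mathcal{E}_\pi$. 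Summing over $\tau$ gives $f\in\overline{\mathcal{E}}$, and together with the orthogonality relations this yields the orthogonal decomposition and orthonormal basis in (i) as well as the density half of (ii).

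With (i) and (iv) established, statement (iii) is immediate: by (i) the space is the orthogonal sum of the invariant pieces $\mathcal{E}_\pi$, and by (iv) each $\mathcal{E}_\pi$ splits under $\pi_{\mu,R}$ into its $d_\pi$ rows, each equivalent to $\pi$, so $\pi_{\mu,R}$ is completely reducible with $[\pi]$ occurring with multiplicity $d_\pi$; the same argument applied to the columns, which carry $\widetilde{\pi}$, combined with the dimension-preserving bijection $\pi\mapsto\widetilde{\pi}$ on $\widehat{G}_{\mathcal{H}}$, gives the corresponding statement for $\pi_{\mu,L}$. The one point demanding care throughout is the passage from $K$ to $G$: the finite-dimensionality of the cyclic space and the extension-plus-rigidity step via Propositions \ref{T:representation} and \ref{T:extension} are precisely what convert Peter--Weyl for $K$ into the completeness statement on $G$, and this is the crux of the proof.
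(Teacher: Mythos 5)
Your proposal is correct, and while it follows the same overall skeleton as the paper (orthogonality relations, translation formulas yielding (ii) and (iv), completeness via representation theory of the compact group $K$ plus holomorphic rigidity), both main steps are executed by genuinely different arguments. For orthogonality the paper works at the level of $G$: it averages $\pi(g)A\pi'(g)^*$ and $\pi(g)^*A\pi(g)$ against $d\mu$, shows by $K$-left- (resp.\ right-) invariance that these are intertwiners for $\pi|_K$ and $\pi'|_K$, and applies Schur's Lemma twice, piecing together ``same row'' and ``same column'' norm equalities; you instead push the computation down to $K$ by averaging the inner product over $K\times K$ and quoting the classical Schur orthogonality relations, which delivers $\langle\pi_{ij},\sigma_{kl}\rangle=\delta_{[\pi][\sigma]}\delta_{ik}\delta_{jl}\,C_{\pi,\mu}/d_\pi^2$ in one stroke (you should say a word about Fubini, but the bound $\|\pi(xgy)\|\,\|\sigma(xgy)\|\leq\frac{1}{2}\left(\|\pi(xgy)\|^2+\|\sigma(xgy)\|^2\right)$ together with $K$-bi-invariance and $K$-admissibility makes the interchange immediate). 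For completeness the paper argues by contradiction: if $\overline{\mathcal{E}}^{\bot}\neq 0$ it contains a $K$-irreducible subspace with matrix $(\pi_{ij}|_K)$, and setting $g=e$ in $f_i(gx)=\sum_j\pi_{ji}(x)f_j(g)$ gives $f_i=\sum_j f_j(e)\pi_{ji}$ on $K$, hence on $G$ by Proposition \ref{T:extension} --- so only functions, never representations, are analytically continued. Your argument is direct: you decompose into $K$-isotypic pieces, extend the cyclic $K$-representation on $W=\mathrm{span}\{R_yf_\tau:y\in K\}$ holomorphically via Proposition \ref{T:representation}, identify $\rho(g)$ with $R_g|_W$ by rigidity, and read off $f_\tau\in\mathcal{E}_\pi$ from $w(g)=(\rho(g)w)(e)$. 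This buys something the paper's proof does not state (each isotypic projection lands explicitly in $\mathcal{E}_\pi$, and $W$ is invariant under \emph{all} right translations), at the cost of one extra standard fact you assert without proof --- that the cyclic span of a single vector in a $\tau$-isotypic component is finite-dimensional (of dimension at most $d_\tau^2$, by writing the vector as $\sum_i e_i\otimes m_i$ in $V_\tau\otimes M$); the paper's evaluation-at-$e$ trick is shorter and avoids extending representations altogether. Your treatment of (ii), (iii), (iv) from the translation formulas, including the $\pi\mapsto\widetilde{\pi}$ bookkeeping for $\pi_{\mu,L}$, coincides with the paper's, and, like the paper (which cites Folland), you leave implicit the strong continuity of $\pi_{\mu,R}|_K$ needed to invoke the compact-group decomposition theorems.
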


\begin{proof}
We first show that for any
$[\pi],[\pi']\in\widehat{G}_{\mathcal{H}}$, we have
\begin{equation}\label{e:formula}
\int_{G}\pi_{ij}(g)\overline{\pi'_{kl}(g)}\,d\mu(g)=
\begin{cases}
\frac{C_{\pi,\mu}}{d_\pi^2}, &\text{if}\; [\pi]=[\pi'], i=k, j=l;\\
0, &\text{otherwise}.
\end{cases}
\end{equation}

For each linear transform $A:V_{\pi'}\to V_{\pi}$, define
$$
\overline{A}=\int_{G}\pi(g)A\pi'(g)^{*}\,d\mu(g),
$$
where $\pi'(g)^{*}$ is the adjoint of $\pi'(g)$ with respect to the
chosen inner product on $V_{\pi'}$. For each $x\in K$, by the
$K$-left-invariance of $d\mu$, we can easily prove that
$\pi(x)\overline{A}\pi'(x)^{-1}=\overline{A}$. So $\overline{A}$ is
an intertwining operator between $\pi|_{K}$ and $\pi'|_{K}$. Since
$\pi|_{K}$ and $\pi'|_{K}$ are irreducible, by Schur's Lemma, we
have
$$
\overline{A}=\int_{G}\pi(g)A\pi'(g)^{*}\,d\mu(g)=
\begin{cases}
0, &[\pi]\neq[\pi'];\\
c_{A}I_{d_{\pi}}, &\pi=\pi',
\end{cases}
$$
where $c_{A}$ is a constant. If $[\pi]\neq[\pi']$, by the
arbitrariness of $A$, we have
$$
\int_{G}\pi_{ij}(g)\overline{\pi'_{kl}(g)}\,d\mu(g)=0.
$$
That is, $\mathcal{E}_{\pi}\bot\mathcal{E}_{\pi'}$. Now assume
$\pi=\pi'$. Then
$$
\overline{A}_{ij}=\sum_{k,l}A_{kl}\int_{G}\pi_{ik}(g)\overline{\pi_{jl}(g)}\,d\mu(g)
=c_{A}\delta_{ij},
$$
where $(A_{ij})$ is the matrix form of $A$ with respect to the
chosen bases of $V_{\pi}$. If $i\neq j$, we have
$$
\int_{G}\pi_{ik}(g)\overline{\pi_{jl}(g)}\,d\mu(g)=0, \qquad i\neq
j.
$$
Hence two matrix elements of $\pi$ which lie in different row are
orthogonal. Now take $i=j$, $(A_{kl})=E_{kk}$, where $E_{kk}$ is the
matrix with $1$ at the $(k,k)$-position and $0$ elsewhere, then we
obtain
$$
\int_{G}\pi_{ik}(g)\overline{\pi_{ik}(g)}\,d\mu(g)=c_{E_{kk}}
$$
for $1\leq i\leq d_\pi$. Hence the matrix elements lying in the same
column have the same norm.

For an endomorphism $A$ of $V_{\pi}$, we define
$$
\overline{\overline{A}}=\int_{G}\pi(g)^{*}A\pi(g)\,d\mu(g).
$$
Then, similarly, by the $K$-right-invariance of $d\mu$,
$\overline{\overline{A}}$ is an intertwining operator of $\pi|_{K}$
and then $ \overline{\overline{A}}=c'_{A}I_{d_{\pi}} $ for some
constant $c'_{A}$. So we have
$$
\overline{\overline{A}}_{ij}
=\sum_{k,l}A_{kl}\int_{G}\pi_{lj}(g)\overline{\pi_{ki}(g)}\,d\mu(g)=c'_{A}\delta_{ij}.
$$
Then
$$
\int_{G}\pi_{lj}(g)\overline{\pi_{ki}(g)}\,d\mu(g)=0, \qquad i\neq
j,
$$
$$
\int_{G}\pi_{ki}(g)\overline{\pi_{ki}(g)}\,d\mu(g)=c'_{E_{kk}}.
$$
Hence two matrix elements which lie in different columns are
orthogonal and the matrix elements lying in the same row  have the
same norm.

Combining the above results, we get the conclusion that two
different matrix elements of $\pi$ are orthogonal and all matrix
elements of $\pi$ have the same norm. Moreover, we have
\begin{align*}
&\int_{G}\pi_{ij}(g)\overline{\pi_{ij}(g)}\,d\mu(g)\\
=&\frac{1}{d_{\pi}^{2}}\int_{G}\sum_{k,l=1}^{d_\pi}\pi_{kl}(g)\overline{\pi_{kl}(g)}\,d\mu(g)\\
=&\frac{1}{d_{\pi}^{2}}\int_{G}\|\pi(g)\|^{2}\,d\mu(g)\\
=&\frac{C_{\pi,\mu}}{d_{\pi}^{2}}.
\end{align*}
This completes the proof of \eqref{e:formula}.

Now for each $[\pi]\in\widehat{G}_{\mathcal{H}}$, consider the
action of $\pi_{\mu,R}$ and $\pi_{\mu,L}$ on $\mathcal{E}_{\pi}$. We
have
\begin{align*}
(\pi_{\mu,R}(g)\pi_{ij})(h)=&\pi_{ij}(hg)=
\sum_{k=1}^{d_{\pi}}\pi_{ik}(h)\pi_{kj}(g),\\
(\pi_{\mu,L}(g)\pi_{ij})(h)=&\pi_{ij}(g^{-1}h)=
\sum_{k=1}^{d_{\pi}}\widetilde{\pi}_{ki}(g)\pi_{kj}(h).
\end{align*}
That is,
\begin{align*}
\pi_{\mu,R}(g)\pi_{ij}=&\sum_{k=1}^{d_{\pi}}\pi_{kj}(g)\pi_{ik},\\
\pi_{\mu,L}(g)\pi_{ij}=&\sum_{k=1}^{d_{\pi}}\widetilde{\pi}_{ki}(g)\pi_{kj}.
\end{align*}
Hence $\mathcal{E}_{\pi}$ is contained in the domains of
$\pi_{\mu,R}(g)$ and $\pi_{\mu,L}(g)$, and for $1\leq i\leq d_\pi$,
the subspace of $\mathcal{E}_{\pi}$ spanned by the $i$-th row (resp.
the $i$-th column) of the matrix $(\pi_{ij})$ is invariant under
$\pi_{\mu,R}$ (resp. $\pi_{\mu,L}$), and the restriction of
$\pi_{\mu,R}$ (resp. $\pi_{\mu,L}$) to this subspace is equivalent
to $\pi$ (resp. $\widetilde{\pi}$ ). For $\pi_{\mu,L}$, its
restriction to the subspace of $\mathcal{E}_{\widetilde{\pi}}$
spanned by the $i$-th column of $(\widetilde{\pi}_{ij})$ is
equivalent to $\widetilde{\widetilde{\pi}}=\pi$.

The set (4.2) spans a closed subspace
$V=\overline{\mathcal{E}}=\bigoplus_{[\pi]
\in\widehat{G}_{\mathcal{H}}}\mathcal{E}_{\pi}$ of
$\mathcal{H}L^{2}(G,d\mu)$. We prove that
$V=\mathcal{H}L^{2}(G,d\mu)$. If not, since $V$ is invariant under
$\pi_{\mu,R}$ and then invariant under $\pi_{\mu,R}|_{K}$, which is
a unitary representation of $K$, $V^{\bot}\neq0$ is also invariant
under $\pi_{\mu,R}|_{K}$. Because a representation of a compact
group is completely reducible (Folland \cite{Fo}, Theorem 5.2), we
can choose an irreducible subspace $W$ of $V^{\bot}$ under
$\pi_{\mu,R}|_{K}$. By Proposition \ref{T:representation}, the
restriction of $\pi_{\mu,R}|_{K}$ on $W$ is equivalent to $\pi|_{K}$
for some $[\pi]\in\widehat{G}_{\mathcal{H}}$. We choose a suitable
basis $\{f_{1},\cdots,f_{d_{\pi}}\}$ of $W$ such that with respect
to this basis, the matrix of the subrepresentation of
$\pi_{\mu,R}|_{K}$ on $W$ is $(\pi_{ij}|_{K})$. Then
$$
f_{i}(gx)=(\pi_{\mu,R}(x)f_{i})(g)=\sum_{j=1}^{{d_{\pi}}}\pi_{ji}(x)f_{j}(g),
\quad x\in K, g\in G, i=1,\cdots,d_{\pi}.
$$
Let $g=e$, we have
$$
f_{i}(x)=\sum_{j=1}^{{d_{\pi}}}f_{j}(e)\pi_{ji}(x), \qquad x\in K.
$$
Since $f_{i}$'s and $\pi_{ij}$'s are all holomorphic on $G$, by
Proposition \ref{T:extension}, we have
$$
f_{i}=\sum_{j=1}^{{d_{\pi}}}f_{j}(e)\pi_{ji}.
$$
So $f_{i}\in \mathcal{E}_{\pi}$, which contradicts to $W\bot V$. The
proof of the theorem is completed.
\end{proof}

\begin{theorem}
Let $G$ be a complex reductive group. Then $\mathcal{E}$ is a dense
subspace of the Fr\'echet space $\mathcal{H}(G)$.
\end{theorem}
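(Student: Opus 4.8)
The plan is to deduce this from the $L^{2}$ Peter--Weyl-type theorem (Theorem~\ref{T:Peter-Weyl}) together with the abundance of $K$-admissible measures (Theorem~\ref{T:existence}). Since $\mathcal{H}(G)$ is metrizable, it suffices to exhibit every $f\in\mathcal{H}(G)$ as a locally uniform limit of a sequence in $\mathcal{E}$. So first I would fix such an $f$ and a maximal compact subgroup $K$, and apply Theorem~\ref{T:existence} to the singleton $\mathcal{F}=\{f\}$ to produce a $K$-admissible measure $d\mu$ with $f\in\mathcal{H}L^{2}(G,d\mu)$.

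Next, by part (i) of Theorem~\ref{T:Peter-Weyl}, the normalized matrix elements $\frac{d_\pi}{\sqrt{C_{\pi,\mu}}}\pi_{ij}$ form an orthonormal basis of the Hilbert space $\mathcal{H}L^{2}(G,d\mu)$. Expanding $f$ against this basis gives a Fourier series whose partial sums converge to $f$ in the $L^{2}$-$d\mu$ norm; each partial sum is a finite linear combination of matrix elements $\pi_{ij}$ and hence belongs to $\mathcal{E}$.

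The crucial transfer is from $L^{2}$-convergence to convergence in the Fr\'echet topology of $\mathcal{H}(G)$, i.e.\ to locally uniform convergence. This is precisely guaranteed by the tameness of $d\mu$ (condition (1) of $K$-admissibility): as already recorded just after the definition of $\mathcal{H}L^{2}(G,d\mu)$, an $L^{2}$-$d\mu$-convergent sequence of holomorphic functions converges locally uniformly to the same limit. Applying this to the partial sums shows that they converge to $f$ in $\mathcal{H}(G)$, so $f\in\overline{\mathcal{E}}$, completing the argument.

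I do not anticipate a genuine obstacle, because the substantive work already lives in the two theorems proved above---the completeness of the matrix-element system (the hard part of Theorem~\ref{T:Peter-Weyl}) and the construction of a $K$-admissible measure making a prescribed holomorphic function $L^{2}$-integrable. The only point requiring minor care is that the measure $d\mu$ depends on the chosen $f$; this is harmless, since density merely requires, for each individual $f$, one approximating sequence drawn from $\mathcal{E}$, and no single measure need work simultaneously for all $f$.
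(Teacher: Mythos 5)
Your proposal is correct and follows exactly the paper's own argument: apply Theorem~\ref{T:existence} to obtain a $K$-admissible measure $d\mu$ with $f\in\mathcal{H}L^{2}(G,d\mu)$, use Theorem~\ref{T:Peter-Weyl}(i) to place $f$ in the $L^{2}$-closure of $\mathcal{E}$, and transfer to the Fr\'echet topology via the fact that $L^{2}$-$d\mu$-convergence of holomorphic functions implies locally uniform convergence. Your added remarks (partial sums lying in $\mathcal{E}$, and the harmlessness of the measure depending on $f$) merely make explicit points the paper leaves implicit.
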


\begin{proof}
Choose a maximal compact subgroup $K$ of $G$. Let
$f\in\mathcal{H}(G)$. By Theorem \ref{T:existence}, there is a
$K$-admissible measure $d\mu$ on $G$ such that
$f\in\mathcal{H}L^2(G,d\mu)$. By Theorem \ref{T:Peter-Weyl}, $f$
lies in the closure of $\mathcal{E}$ with respect to the Hilbert
topology on $\mathcal{H}L^2(G,d\mu)$, hence lies in the Fr\'echet
topology on $\mathcal{H}(G)$. This proves the theorem.
\end{proof}


\section{Holomorphic Fourier transforms on complex reductive groups}

As in the previous section, let $G$ be a complex reductive group.
Choose a maximal compact subgroup $K$ of $G$. For each holomorphic
function $f\in\mathcal{H}(G)$, by Theorem \ref{T:existence}, there
exists a $K$-admissible measure $d\mu$ such that
$f\in\mathcal{H}L^{2}(G,d\mu)$. Then by Theorem \ref{T:Peter-Weyl},
we can expand $f$ as
\begin{equation}\label{e:6.1}
f=\sum_{[\pi]\in\widehat{G}_{\mathcal{H}}}\sum_{i,j=1}^{d_\pi}\lambda_{\pi,ij}\pi_{ij}\;,
\end{equation}
where
\begin{equation}
\lambda_{\pi,ij}=\frac{d_\pi^2}{C_{\pi,\mu}}\int_{G}f(g)\overline{\pi_{ij}(g)}\,d\mu(g).
\end{equation}
Equation \eqref{e:6.1} is called the \emph{holomorphic Fourier
series} of $f$. It obviously converges in $\mathcal{H}L^{2}(G,d\mu)$
by Theorem \ref{T:Peter-Weyl}, hence also converges locally
uniformly.

\begin{remark}
When $G=\mathbb{C}^{*}$, the Fourier series \eqref{e:6.1} is just
the Laurant series in complex analysis.
\end{remark}

Now we give the definition of the holomorphic Fourier transform of
$f\in\mathcal{H}(G)$ as follows.

\begin{definition}
For $f\in\mathcal{H}(G)$, its \emph{holomorphic Fourier transform}
$\widehat{f}$ is defined by
\begin{equation}
\widehat{f}(\pi)=\frac{1}{C_{\pi,\mu}}\int_{G}f(g)\pi(g)^*\,d\mu(g),
\qquad [\pi]\in\widehat{G}_{\mathcal{H}},
\end{equation}
where $d\mu$ is a $K$-admissible measure on $G$ such that
$f\in\mathcal{H}L^{2}(G,d\mu)$.
\end{definition}

Note that $\widehat{f}(\pi)$ is an operator in the representation
space $V_\pi$ of $\pi$. With the orthonormal basis $\{e_{j}\}$ of
$V_{\pi}$ that we have chosen, $\widehat{f}(\pi)$ is given in the
matrix form
\begin{equation}
\widehat{f}(\pi)_{ij}=\frac{1}{C_{\pi,\mu}}\int_{G}f(g)
\overline{\pi_{ji}(g)}\,d\mu(g).
\end{equation}
Comparing this equation with formula (5.2), we have
\begin{equation}
\widehat{f}(\pi)_{ij}=\frac{1}{d_\pi^2}\lambda_{\pi,ji}.
\end{equation}

The following proposition shows that the Fourier transform is
independent of the choice of $K$-admissible measures. To avoid the
ambiguity, we denote the holomorphic Fourier transform of
$f\in\mathcal{H}(G)$ by $\widehat{f}_\mu$ for the moment, when the
$K$-admissible measure involved in the definition is $d\mu$.

\begin{proposition}
Let $d\mu$ and $d\nu$ be two $K$-admissible measures on $G$. If
$f\in\mathcal{H}L^{2}(G,d\mu)\bigcap\mathcal{H}L^{2}(G,d\nu)$, then
$\widehat{f}_\mu=\widehat{f}_\nu$.
\end{proposition}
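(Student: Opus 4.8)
The plan is to reduce the operator identity $\widehat{f}_\mu=\widehat{f}_\nu$ to an identity of scalar Fourier coefficients, and then to establish the latter by restricting both Fourier series to the maximal compact subgroup $K$, where the classical Schur orthogonality relations force the coefficients to coincide. Write $\lambda^{\mu}_{\pi,ij}$ and $\lambda^{\nu}_{\pi,ij}$ for the coefficients appearing in the holomorphic Fourier series of $f$ computed, respectively, with the $K$-admissible measures $d\mu$ and $d\nu$. By the relation $\widehat{f}(\pi)_{ij}=\tfrac{1}{d_\pi^{2}}\lambda_{\pi,ji}$ established just above the statement, we have $\widehat{f}_\mu(\pi)_{ij}=\tfrac{1}{d_\pi^{2}}\lambda^{\mu}_{\pi,ji}$ and $\widehat{f}_\nu(\pi)_{ij}=\tfrac{1}{d_\pi^{2}}\lambda^{\nu}_{\pi,ji}$. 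Hence it suffices to prove that $\lambda^{\mu}_{\pi,ij}=\lambda^{\nu}_{\pi,ij}$ for every $[\pi]\in\widehat{G}_{\mathcal{H}}$ and all $i,j$.

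By Theorem \ref{T:Peter-Weyl} both expansions $f=\sum_{[\pi],i,j}\lambda^{\mu}_{\pi,ij}\pi_{ij}$ and $f=\sum_{[\pi],i,j}\lambda^{\nu}_{\pi,ij}\pi_{ij}$ converge in the respective Hilbert spaces, hence locally uniformly on $G$. Since $K$ is compact, locally uniform convergence on $G$ restricts to uniform convergence on $K$, and therefore to convergence in $L^{2}(K,dk)$, where $dk$ is the normalized Haar measure on $K$. Thus $f|_K$ admits two $L^{2}(K)$-convergent expansions in the single system $\{\pi_{ij}|_K\}$. Now this system is orthogonal in $L^{2}(K)$: by our choice of inner product on each $V_\pi$ the restriction $\pi|_K$ is unitary and the $\pi_{ij}|_K$ are its matrix entries in an orthonormal basis; by Proposition \ref{T:representation} each $\pi|_K$ is irreducible, and as $[\pi]$ runs over $\widehat{G}_{\mathcal{H}}$ the classes $[\pi|_K]$ exhaust $\widehat{K}$ under the bijection $\widehat{G}_{\mathcal{H}}\leftrightarrow\widehat{K}$. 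The Schur orthogonality relations then give $\langle\pi_{ij}|_K,\pi'_{kl}|_K\rangle_{L^{2}(K)}=\tfrac{1}{d_\pi}\,\delta_{\pi\pi'}\delta_{ik}\delta_{jl}$.

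To extract the coefficients, take the $L^{2}(K)$-inner product of each expansion with a fixed $\pi_{ij}|_K$; this is legitimate because the inner product with a fixed vector is continuous and the series converge in $L^{2}(K)$, so it may be applied term by term. Only the single matching term survives, yielding $\lambda^{\mu}_{\pi,ij}=d_\pi\langle f|_K,\pi_{ij}|_K\rangle_{L^{2}(K)}=\lambda^{\nu}_{\pi,ij}$. Combined with the first reduction this gives $\widehat{f}_\mu=\widehat{f}_\nu$. The step requiring the most care is the transition from locally uniform convergence on $G$ to $L^{2}(K)$-convergence of the restricted series together with the identification of $\{\pi_{ij}|_K\}$ as the complete Peter--Weyl system of $K$; once these are secured, uniqueness of coefficients in an orthogonal expansion is immediate. (Alternatively one could invoke Proposition \ref{T:extension} to recover $f$ from $f|_K$, but the orthogonality on $K$ already does all the work.)
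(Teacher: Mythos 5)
Your proposal is correct and follows essentially the same route as the paper: restrict the locally uniformly convergent Fourier series to $K$, upgrade to $L^{2}(K)$-convergence, and conclude that the coefficients $\lambda_{\pi,ij}$ are determined by $f|_K$ alone, hence independent of the admissible measure. The only difference is cosmetic --- where the paper simply invokes uniqueness of Fourier coefficients on the compact group $K$, you unpack that step explicitly via the Schur orthogonality relations and term-by-term extraction, which is exactly what the paper's appeal to classical Peter--Weyl theory amounts to.
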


\begin{proof}
To be precise, we rewrite the symbol $\lambda_{\pi,ij}$ defined in
(5.2) by $\lambda^\mu_{\pi,ij}$. So by \eqref{e:6.1}, we have
$$f=\sum_{[\pi]\in\widehat{G}_{\mathcal{H}}}\sum_{i,j=1}^{d_\pi}\lambda^\mu_{\pi,ij}\pi_{ij}\;.$$
Let $K$ be a maximal compact subgroup with the normalized Haar
measure $dx$. Since the above series converges locally uniformly, it
converges uniformly on $K$. But on the compact group $K$, uniform
convergence implies convergence in $L^2(K,dx)$, so the series
$$f|_K=\sum_{[\pi]\in\widehat{G}_{\mathcal{H}}}\sum_{i,j=1}^{d_\pi}\lambda^\mu_{\pi,ij}\pi_{ij}|_K$$
is in fact the Fourier series of $f|_K$ on $K$. Hence the
coefficient $\lambda^\mu_{\pi,ij}$ is uniquely determined by $f|_K$,
which is independent of $\mu$. That is,
$\lambda^\mu_{\pi,ij}=\lambda^\nu_{\pi,ij}$. By equation (5.5), we
get $\widehat{f}_\mu=\widehat{f}_\nu$.
\end{proof}

Thanks to this proposition, we can omit the subscription and simply
denote the Fourier transform of $f$ by $\widehat{f}$.

\begin{theorem}
For $f\in\mathcal{H}(G)$, we have the Fourier inversion formula
\begin{equation}
f(g)=\sum_{[\pi]\in\widehat{G}_{\mathcal{H}}}
d_\pi^2\,tr(\widehat{f}(\pi)\pi(g)).
\end{equation}
The series converges locally uniformly. If $d\mu$ is an
$K$-admissible measure such that $f\in\mathcal{H}L^{2}(G,d\mu)$,
then the series also converges in $\mathcal{H}L^{2}(G,d\mu)$.
\end{theorem}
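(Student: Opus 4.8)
The plan is to recognize the inversion formula as nothing more than the holomorphic Fourier series \eqref{e:6.1} rewritten in operator form, so that every convergence assertion is inherited directly from Theorem \ref{T:Peter-Weyl}. First I would fix a $K$-admissible measure $d\mu$ with $f\in\mathcal{H}L^{2}(G,d\mu)$, which exists by Theorem \ref{T:existence}, and recall from \eqref{e:6.1} that $f=\sum_{[\pi]}\sum_{i,j=1}^{d_\pi}\lambda_{\pi,ij}\pi_{ij}$ with convergence both in $\mathcal{H}L^{2}(G,d\mu)$ and locally uniform. Note also that the right-hand side of the claimed formula is meaningful and $\mu$-independent, since $\widehat{f}(\pi)$ was shown above to be independent of the choice of $K$-admissible measure.

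The computational heart is a short index manipulation. Expanding the trace in the chosen orthonormal basis $\{e_j\}$ of $V_\pi$ gives $\mathrm{tr}(\widehat{f}(\pi)\pi(g))=\sum_{i,k}\widehat{f}(\pi)_{ik}\,\pi_{ki}(g)$, because the $(k,i)$ entry of the matrix $\pi(g)$ is precisely $\pi_{ki}(g)$. I would then substitute the matrix form (5.5), namely $\widehat{f}(\pi)_{ik}=\frac{1}{d_\pi^{2}}\lambda_{\pi,ki}$, to obtain $d_\pi^{2}\,\mathrm{tr}(\widehat{f}(\pi)\pi(g))=\sum_{i,k}\lambda_{\pi,ki}\,\pi_{ki}(g)$, which after relabelling the summation indices is exactly $\sum_{i,j}\lambda_{\pi,ij}\,\pi_{ij}(g)$. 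Thus the single term indexed by $[\pi]$ in the inversion formula coincides, as a function on $G$, with the full $\mathcal{E}_\pi$-block $\sum_{i,j}\lambda_{\pi,ij}\pi_{ij}$ of the Fourier series.

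Summing over $[\pi]\in\widehat{G}_{\mathcal{H}}$ then yields $f(g)=\sum_{[\pi]}d_\pi^{2}\,\mathrm{tr}(\widehat{f}(\pi)\pi(g))$, and it remains only to transfer the convergence. Here I would invoke Theorem \ref{T:Peter-Weyl}(i), which presents $\mathcal{H}L^{2}(G,d\mu)$ as the orthogonal Hilbert-space direct sum $\bigoplus_{[\pi]}\mathcal{E}_\pi$. The $\pi$-block $\sum_{i,j}\lambda_{\pi,ij}\pi_{ij}$ is precisely the orthogonal projection of $f$ onto $\mathcal{E}_\pi$, so the regrouped series $\sum_{[\pi]}d_\pi^{2}\,\mathrm{tr}(\widehat{f}(\pi)\pi(g))$ is exactly the expansion of $f$ along this orthogonal decomposition and therefore converges in $\mathcal{H}L^{2}(G,d\mu)$. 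Since $d\mu$ is tame, $L^{2}$-$d\mu$-convergence of holomorphic functions forces locally uniform convergence, which gives the locally uniform statement as well; for a general $f\in\mathcal{H}(G)$ one simply chooses, via Theorem \ref{T:existence}, some $K$-admissible $d\mu$ with $f\in\mathcal{H}L^{2}(G,d\mu)$.

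The whole argument is essentially a reformulation, and the only point requiring care—the nearest thing to an obstacle—is the passage from the doubly-indexed Fourier series to the $\pi$-grouped single sum: I must justify that collecting the $(i,j)$ terms belonging to a common $[\pi]$ preserves convergence. This is not a generic rearrangement issue, but follows cleanly from the mutual orthogonality of the subspaces $\mathcal{E}_\pi$ in Theorem \ref{T:Peter-Weyl}(i), for which the partial sums over finite sets of $[\pi]$ are genuine orthogonal projections of $f$; hence no conditional-convergence difficulty arises. Everything else is formal.
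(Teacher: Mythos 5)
Your proposal is correct and takes essentially the same approach as the paper: the identical index manipulation via (5.5) rewriting $\sum_{i,j}\lambda_{\pi,ij}\pi_{ij}(g)$ as $d_\pi^2\,\mathrm{tr}(\widehat{f}(\pi)\pi(g))$, with all convergence claims inherited from the expansion \eqref{e:6.1} and Theorem \ref{T:Peter-Weyl}. Your explicit justification of the grouping by $[\pi]$ through orthogonality of the $\mathcal{E}_\pi$ only spells out what the paper's ``it obviously converges'' leaves implicit.
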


\begin{proof}
Suppose $d\mu$ is a $K$-admissible measure with
$f\in\mathcal{H}L^{2}(G,d\mu)$. By equation (5.5), we have
\begin{align*}
\sum_{i,j=1}^{d_\pi}\lambda_{\pi,ij}\pi_{ij}(g)
=&d_\pi^2\sum_{i,j=1}^{d_\pi}\widehat{f}(\pi)_{ji}\pi_{ij}(g)\\
=&d_\pi^2\,tr(\widehat{f}(\pi)\pi(g)).
\end{align*}
By \eqref{e:6.1}, we get
$$f(g)=\sum_{[\pi]\in\widehat{G}_{\mathcal{H}}}
d_\pi^2\,tr(\widehat{f}(\pi)\pi(g)).$$ It obviously converges in
$\mathcal{H}L^{2}(G,d\mu)$, hence also converges locally uniformly.
\end{proof}

\begin{corollary}
For $f\in\mathcal{H}(G)$, we have
\begin{equation}
f(e)=\sum_{[\pi]\in\widehat{G}_{\mathcal{H}}}
d_\pi^2\,tr(\widehat{f}(\pi)).
\end{equation}
\end{corollary}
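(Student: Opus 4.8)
The plan is to obtain this identity by specializing the Fourier inversion formula of the preceding theorem to the identity element $g=e$. First I would recall that the inversion formula
$$
f(g)=\sum_{[\pi]\in\widehat{G}_{\mathcal{H}}} d_\pi^2\,\mathrm{tr}(\widehat{f}(\pi)\pi(g))
$$
holds for every $g\in G$, and that the series converges locally uniformly on $G$. Local uniform convergence in particular implies pointwise convergence, so we are free to evaluate both sides at the single point $g=e$ and to read off the value term by term.

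Next I would substitute $g=e$. Since $\pi$ is a representation, $\pi(e)$ is the identity operator $I_{d_\pi}$ on the representation space $V_\pi$. Hence each summand collapses to
$$
d_\pi^2\,\mathrm{tr}(\widehat{f}(\pi)\pi(e)) = d_\pi^2\,\mathrm{tr}(\widehat{f}(\pi)I_{d_\pi}) = d_\pi^2\,\mathrm{tr}(\widehat{f}(\pi)).
$$
Summing over $[\pi]\in\widehat{G}_{\mathcal{H}}$ then yields precisely the asserted formula for $f(e)$.

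There is essentially no obstacle here, since the whole analytic content, namely the existence of the expansion and its convergence, is already supplied by the inversion theorem, and the corollary is a pure specialization. The only point requiring any care is convergence, but as locally uniform convergence guarantees pointwise convergence, evaluating at $g=e$ term by term needs no further justification.
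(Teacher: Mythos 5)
Your proof is correct and is exactly the argument the paper intends: the corollary is stated with no written proof because it is the immediate specialization of the Fourier inversion formula at $g=e$, using $\pi(e)=I_{d_\pi}$, with convergence already guaranteed by the inversion theorem. Nothing is missing.
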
\qed

Equation (5.7) can be viewed as a version of Plancherel Theorem,
which reflects the completeness of the holomorphic Fourier analysis.

Now we give the relation between the Fourier transform of a function
$f\in\mathcal{H}(G)$ and the Fourier transform of $R_gf$ and $L_gf$
for $g\in G$, where $R_g$ and $L_g$ are the right and left actions
of $g$ on $\mathcal{H}(G)$, respectively.

\begin{theorem}
For $g\in G$ and $f\in\mathcal{H}(G)$, we have
\begin{equation}
(R_gf)\,\widehat{}\,(\pi)=\pi(g)\widehat{f}(\pi).
\end{equation}
\begin{equation}
(L_gf)\,\widehat{}\,(\pi)=\widehat{f}(\pi)\pi(g^{-1}).
\end{equation}
\end{theorem}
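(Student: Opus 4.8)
The plan is to avoid a direct change of variables in the defining integral --- which would fail, since the $K$-admissible measure $d\mu$ is only $K$-bi-invariant, not right- or left-invariant --- and instead route everything through the holomorphic Fourier series together with the index relation (5.5), namely $\widehat{f}(\pi)_{ij}=\frac{1}{d_\pi^2}\lambda_{\pi,ji}$. The key observation is that the matrix $\widehat{f}(\pi)$ merely repackages the Fourier coefficients $\lambda_{\pi,ij}$, so it suffices to determine how these coefficients transform under $R_g$ and $L_g$; and the relevant transformation rules have already been computed inside the proof of Theorem \ref{T:Peter-Weyl}.

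First I would fix the ambient measure. By Theorem \ref{T:existence} applied to the one-element family $\mathcal{F}=\{f\}$, choose a single $K$-admissible $d\mu$ with $R_{g_1}L_{g_2}f\in\mathcal{H}L^{2}(G,d\mu)$ for all $g_1,g_2\in G$; in particular $f$, $R_gf$ and $L_gf$ all lie in $\mathcal{H}L^{2}(G,d\mu)$, so all three Fourier transforms may be computed with respect to this one $d\mu$ (the result being independent of the choice by the Proposition). Expand $f=\sum_{[\pi]}\sum_{i,j}\lambda_{\pi,ij}\pi_{ij}$ as in (5.1). I then apply $R_g$ termwise: since $R_g$ is continuous on the Fr\'echet space $\mathcal{H}(G)$ --- for compact $X$ one has $\sup_{h\in X}|(R_gu)(h)|=\sup_{h'\in Xg}|u(h')|$ with $Xg$ compact --- the locally uniformly convergent partial sums of $f$ map to locally uniformly convergent partial sums of $R_gf$. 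Using $R_g\pi_{ij}=\sum_k\pi_{kj}(g)\pi_{ik}$ (equivalently the formula for $\pi_{\mu,R}(g)\pi_{ij}$ in Theorem \ref{T:Peter-Weyl}), which keeps each block $\mathcal{E}_\pi$ invariant, I read off the Fourier coefficients of $R_gf$ as $\lambda^{R_gf}_{\pi,ab}=\sum_j\lambda_{\pi,aj}\pi_{bj}(g)$.

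Converting via (5.5) and $\lambda_{\pi,jk}=d_\pi^2\widehat{f}(\pi)_{kj}$ then gives $\widehat{(R_gf)}(\pi)_{ij}=\sum_k\pi_{ik}(g)\widehat{f}(\pi)_{kj}=(\pi(g)\widehat{f}(\pi))_{ij}$, which is (5.10). The $L_g$ case is structurally identical, now using $L_g\pi_{ij}=\sum_k\widetilde{\pi}_{ki}(g)\pi_{kj}$ together with $\widetilde{\pi}_{jk}(g)=\pi(g^{-1})_{kj}$ (from $\widetilde{\pi}(g)=\pi(g^{-1})^t$), yielding $\widehat{(L_gf)}(\pi)=\widehat{f}(\pi)\pi(g^{-1})$, i.e. (5.11).

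I expect the only genuine obstacle to be justification rather than substance: legitimising the termwise action of $R_g$ and $L_g$ on an infinite series and matching the resulting coefficients with the honest Fourier coefficients of $R_gf$ and $L_gf$. This is handled by (a) the continuity of $R_g,L_g$ on $\mathcal{H}(G)$ for locally uniform convergence; (b) the fact that $R_g,L_g$ preserve each finite-dimensional block $\mathcal{E}_\pi$, so no regrouping across distinct $[\pi]$ is needed; and (c) uniqueness of the Fourier coefficients, obtained as in the Proposition by restricting to $K$ and invoking the Peter--Weyl orthogonality on the compact group $K$. Once these three points are in place, the remaining index manipulations are routine.
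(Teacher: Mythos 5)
Your proof is correct, but it takes a genuinely different route from the paper's. The paper argues directly on the defining integral: for $x\in K$ the change of variables $h\mapsto hx^{-1}$ is legitimate precisely because $d\mu$ is $K$-right-invariant, and unitarity of $\pi(x)$ then gives $(R_xf)\,\widehat{}\,(\pi)=\pi(x)\widehat{f}(\pi)$ for all $x\in K$; the identity is extended from $K$ to all of $G$ by Proposition \ref{T:extension}, since both sides are holomorphic in $g$. So the obstruction you flag at the outset --- that $d\mu$ is only $K$-bi-invariant --- does not kill the direct method; it merely confines the change-of-variables computation to $K$, with holomorphy in $g$ doing the rest. You instead never touch the integral for general $g$: you expand $f$ in its holomorphic Fourier series, push $R_g$ and $L_g$ through termwise (justified, as you note, by continuity of translations for locally uniform convergence and by the block formulas $R_g\pi_{ij}=\sum_k\pi_{kj}(g)\pi_{ik}$ and $L_g\pi_{ij}=\sum_k\widetilde{\pi}_{ki}(g)\pi_{kj}$ established inside the proof of Theorem \ref{T:Peter-Weyl}), and you identify the resulting coefficients with the honest Fourier coefficients of $R_gf$ and $L_gf$ by restricting to $K$ and using orthogonality there --- exactly the uniqueness mechanism of the measure-independence proposition. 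Your index bookkeeping checks out: $\lambda^{R_gf}_{\pi,ab}=\sum_j\lambda_{\pi,aj}\pi_{bj}(g)$ combined with the relation $\widehat{f}(\pi)_{ij}=\frac{1}{d_\pi^2}\lambda_{\pi,ji}$ yields $\pi(g)\widehat{f}(\pi)$, and the $L_g$ computation with $\widetilde{\pi}_{ai}(g)=\pi_{ia}(g^{-1})$ yields $\widehat{f}(\pi)\pi(g^{-1})$. As for what each approach buys: the paper's proof is shorter, but it silently assumes that $g\mapsto (R_gf)\,\widehat{}\,(\pi)=\frac{1}{C_{\pi,\mu}}\int_G f(hg)\pi(h)^*\,d\mu(h)$ is holomorphic on $G$, a point that genuinely requires a local domination or Morera-plus-Fubini argument which the paper does not supply; your series argument sidesteps that analytic issue entirely, at the cost of a longer coefficient-matching step whose ingredients (Theorem \ref{T:existence} providing one measure handling all translates, invariance of each finite-dimensional block $\mathcal{E}_\pi$, and Peter--Weyl uniqueness on $K$) are all already established in the paper.
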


\begin{proof}
We first prove (5.8). By Theorem \ref{T:existence}, we can choose a
$K$-admissible measure $d\mu$ on $G$ such that
$R_gf\in\mathcal{H}L^{2}(G,d\mu)$ for all $g\in G$. For $x\in K$,
$\pi(x)$ is unitary, so we have
\begin{align*}
(R_xf)\,\widehat{}\,(\pi)
=&\frac{1}{C_{\pi,\mu}}\int_{G}f(hx)\pi(h)^*\,d\mu(h)\\
=&\frac{1}{C_{\pi,\mu}}\int_{G}f(h)\pi(hx^{-1})^*\,d\mu(h)\\
=&\frac{1}{C_{\pi,\mu}}\int_{G}f(h)\pi(x^{-1})^*\pi(h)^*\,d\mu(h)\\
=&\frac{\pi(x)}{C_{\pi,\mu}}\int_{G}f(h)\pi(h)^*\,d\mu(h)\\
=&\pi(x)\widehat{f}(\pi).
\end{align*}
That is, $(R_xf)\,\widehat{}\,(\pi)=\pi(x)\widehat{f}(\pi)$ for all
$x\in K$. But $(R_gf)\,\widehat{}\,(\pi)$ and
$\pi(g)\widehat{f}(\pi)$ are holomorphic with respect to $g\in G$,
by Proposition \ref{T:extension},
$(R_gf)\,\widehat{}\,(\pi)=\pi(g)\widehat{f}(\pi)$ for all $g\in G$.
This proves (5.8). The proof of (5.9) is similar.
\end{proof}

We now consider the Fourier transform of a function under the action
of invariant differential operators. Recall that a left-invariant
differential operator $D$ on $G$ can be viewed as an element of the
universal enveloping algebra $U(\mathfrak{g})$ of the Lie algebra
$\mathfrak{g}$ of $G$, and a finite-dimensional representation $\pi$
of $G$ induces a representation of $U(\mathfrak{g})$, which we also
denote by $\pi$.

\begin{corollary}\label{C:D}
Suppose $D$ is a left-invariant differential operator on $G$, then
for $f\in\mathcal{H}(G)$, we have
\begin{equation}
(Df)\,\widehat{}\,(\pi)=\pi(D)\widehat{f}(\pi).
\end{equation}
\end{corollary}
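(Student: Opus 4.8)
The plan is to prove the identity first for $X$ in the Lie algebra $\mathfrak{g}$, regarded as a first-order left-invariant operator, and then to bootstrap to an arbitrary $D\in U(\mathfrak{g})$ by multiplicativity. For $X\in\mathfrak{g}$ the associated left-invariant operator acts by $(Xf)(g)=\frac{d}{dt}\big|_{t=0}f(g\exp tX)=\frac{d}{dt}\big|_{t=0}(R_{\exp tX}f)(g)$, and $Xf$ is again holomorphic, so its Fourier transform is defined. The natural idea is then to differentiate, at $t=0$, the relation $(R_gf)\,\widehat{}\,(\pi)=\pi(g)\widehat{f}(\pi)$ of the preceding theorem along the curve $g=\exp tX$. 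The right-hand side differentiates immediately to $\frac{d}{dt}\big|_{t=0}\pi(\exp tX)\widehat{f}(\pi)=\pi(X)\widehat{f}(\pi)$, which is exactly the desired answer, provided I may exchange differentiation in $t$ with the integral defining $\widehat{(\cdot)}(\pi)$.

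The main obstacle is precisely this exchange of $\frac{d}{dt}$ with the integral $\widehat{(\cdot)}(\pi)=\frac{1}{C_{\pi,\mu}}\int_G(\cdot)(g)\pi(g)^*\,d\mu(g)$. I would handle it inside the Hilbert space $\mathcal{H}L^2(G,d\mu)$, using that each matrix entry of the Fourier transform is, up to the factor $C_{\pi,\mu}$, the inner product of $f$ with a matrix element $\pi_{ji}\in\mathcal{H}L^2(G,d\mu)$; hence $\widehat{(\cdot)}(\pi)$ is continuous on $\mathcal{H}L^2(G,d\mu)$. It therefore suffices to show that the difference quotient $q_t=\frac{1}{t}(R_{\exp tX}f-f)$ converges to $Xf$ in $\mathcal{H}L^2(G,d\mu)$. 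Writing $q_t(g)=\frac{1}{t}\int_0^t(R_{\exp sX}(Xf))(g)\,ds$ by the fundamental theorem of calculus gives $\|q_t-Xf\|_\mu\le\sup_{0\le s\le t}\|R_{\exp sX}(Xf)-Xf\|_\mu$, so the claim reduces to the $L^2$-$d\mu$-continuity at $s=0$ of the right translates of the fixed holomorphic function $Xf$. This continuity is the analytic heart of the argument; it can be obtained by dominated convergence once a $K$-admissible $d\mu$ is chosen, via Theorem \ref{T:existence}, so that $f$, $Xf$ (and as many further derivatives as the domination requires) together with all their translates lie in $\mathcal{H}L^2(G,d\mu)$. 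Granting it, $\widehat{q_t}(\pi)=\frac{1}{t}(\pi(\exp tX)-I)\widehat{f}(\pi)\to\pi(X)\widehat{f}(\pi)$ while at the same time $\widehat{q_t}(\pi)\to(Xf)\,\widehat{}\,(\pi)$, which settles the case $X\in\mathfrak{g}$.

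With the first-order case in hand, the passage to a general left-invariant operator $D\in U(\mathfrak{g})$ is formal. Expanding $D$ as a linear combination of monomials $X_1\cdots X_k$ in a basis of $\mathfrak{g}$, and choosing (again by Theorem \ref{T:existence}) a single $K$-admissible measure covering $f$ and the countably many intermediate holomorphic derivatives $X_i\cdots X_k f$ together with their translates, I would induct on $k$. For a monomial, put $h=X_2\cdots X_k f\in\mathcal{H}(G)$; the first-order case gives $(X_1h)\,\widehat{}\,(\pi)=\pi(X_1)\widehat{h}(\pi)$, the inductive hypothesis gives $\widehat{h}(\pi)=\pi(X_2\cdots X_k)\widehat{f}(\pi)$, and since $\pi:U(\mathfrak{g})\to\mathrm{End}(V_\pi)$ is an algebra homomorphism, $\pi(X_1)\pi(X_2\cdots X_k)=\pi(X_1\cdots X_k)$; linearity in $D$ then finishes the proof. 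Here I am free to compute all Fourier transforms with one measure because $\widehat{f}(\pi)$ is independent of the choice of $K$-admissible measure, as established above. The only points beyond the first-order exchange are bookkeeping: ensuring all intermediate derivatives are absorbed into one measure, and matching the order of composition of the operators with the product in $U(\mathfrak{g})$.
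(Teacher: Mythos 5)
Your proposal is correct and follows essentially the same route as the paper: reduce to $D=X\in\mathfrak{g}$ via the Poincar\'e--Birkhoff--Witt Theorem, then differentiate the identity $(R_gf)\,\widehat{}\,(\pi)=\pi(g)\widehat{f}(\pi)$ along $g=e^{tX}$ at $t=0$. The only difference is one of rigor, in your favor: you justify the interchange of $\frac{d}{dt}$ with the integral defining the transform (via continuity of $\widehat{(\cdot)}(\pi)$ on $\mathcal{H}L^{2}(G,d\mu)$, the difference-quotient estimate, and strong continuity of translations) and spell out the induction on monomials with the measure bookkeeping, steps the paper performs formally or compresses into ``without loss of generality, $D=X$.''
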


\begin{proof}
By the Poincar\'{e}-Birkhoff-Witt Theorem, $D$ can be expressed as a
linear combination of the monomials $X_1X_2\cdots X_r$, where
$X_i\in\mathfrak{g}$. So without loss of generality, we may assume
$D=X\in\mathfrak{g}$. Now
\begin{align*}
(Xf)(g)=&\frac{d}{dt}\Big|_{t=0}f(ge^{tX})\\
=&\frac{d}{dt}\Big|_{t=0}(R_{e^{tX}}f)(g).
\end{align*}
So by (5.8),
\begin{align*}
(Xf)\,\widehat{}\,(\pi)
=&\frac{d}{dt}\Big|_{t=0}(R_{e^{tX}}f)\,\widehat{}\,(\pi)\\
=&\frac{d}{dt}\Big|_{t=0}\pi(e^{tX})\widehat{f}(\pi)\\
=&\pi(X)\widehat{f}(\pi).\\
\end{align*}
This completes the proof.
\end{proof}

For right-invariant differential operator we have a similar
argument, which is deduced from (5.9). We state the conclusion
below. Its proof is similar to that of the above corollary, we omit
it here.

\begin{corollary}
Suppose $D'$ is a right-invariant differential operator on $G$, then
for $f\in\mathcal{H}(G)$, we have
\begin{equation}
(D'f)\,\widehat{}\,(\pi)=\widehat{f}(\pi)\pi(D').
\end{equation}
\end{corollary}\qed


\section{Applications of the Holomorphic Fourier transform\label{7}}

In this section, we give two applications of the holomorphic Fourier
transform. One is to holomorphic class functions on complex
reductive groups, the other is to holomorphic evolution PDEs on such
groups.

First we prove that the linear span $\mathcal{E}_c$ of irreducible
holomorphic characters of a complex reductive group $G$ is dense in
the Fr\'echet space $\mathcal{H}_c(G)$ of holomorphic class
functions on $G$. We prove this by showing that the irreducible
holomorphic characters of a $G$ form an orthogonal basis of the
subspace of class functions in $\mathcal{H}L^{2}(G,d\mu)$, where
$d\mu$ is a $K$-admissible measure on $G$. For a finite-dimensional
holomorphic representation $\pi$ of $G$, the character $\chi_\pi$ of
$\pi$ is defined by $\chi_\pi(g)=\mathrm{tr}\,\pi(g)$. A function
$f\in\mathcal{H}(G)$ is called a class function if $f(gh)=f(hg)$ for
all $g,h\in G$. It is obvious that holomorphic characters are class
functions. We denote the linear span of irreducible holomorphic
characters by $\mathcal{E}_c$, the Fr\'echet space of class
holomorphic functions by $\mathcal{H}_c(G)$, and the subspace of
class functions in $\mathcal{H}L^{2}(G,d\mu)$ by
$\mathcal{H}L_c^{2}(G,d\mu)$.

\begin{theorem}\label{T:class}
(i)
$\{\sqrt{\frac{d_{\pi}}{C_{\pi,\mu}}}\;\chi_\pi:[\pi]\in\widehat{G}_{\mathcal{H}}
\}$ is an orthonormal basis of $\mathcal{H}L_c^{2}(G,d\mu)$.\\
(ii) $\mathcal{E}_c$ is dense in $\mathcal{H}_c(G)$ with respect to
the Fr\'echet topology.
\end{theorem}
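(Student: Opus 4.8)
The plan is to reduce the statement about class functions to the already-proved Peter-Weyl-type theorem (Theorem \ref{T:Peter-Weyl}) by analyzing the action of the characters within the orthogonal decomposition $\mathcal{H}L^{2}(G,d\mu)=\bigoplus_{[\pi]}\mathcal{E}_{\pi}$. The key observation is that $\chi_\pi=\sum_{i=1}^{d_\pi}\pi_{ii}$ lies in $\mathcal{E}_\pi$, so the characters are automatically orthogonal across distinct $[\pi]$ by the block-orthogonality already established in \eqref{e:formula}. Thus the only real content of part (i) is first to identify $\mathcal{H}L_c^{2}(G,d\mu)$ with a direct sum of one-dimensional pieces, and then to normalize.

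First I would prove the key lemma that the orthogonal projection of any class function onto $\mathcal{E}_\pi$ is a scalar multiple of $\chi_\pi$; equivalently, that $\mathcal{H}L_c^{2}(G,d\mu)\cap\mathcal{E}_\pi=\mathbb{C}\chi_\pi$. To see this, take $f\in\mathcal{H}L_c^{2}(G,d\mu)$ and expand $f=\sum_{[\pi]}\sum_{i,j}\lambda_{\pi,ij}\pi_{ij}$ via Theorem \ref{T:Peter-Weyl}. The class-function condition $f(g^{-1}hg)=f(h)$ should translate, after using the transformation law $\pi_{ij}(g^{-1}hg)=\sum_{k,l}\pi_{ik}(g^{-1})\pi_{kl}(h)\pi_{lj}(g)$ and the uniqueness of Fourier coefficients on $K$ (Proposition \ref{T:extension} extends this to $G$), into the matrix identity $\Lambda_\pi\,\pi(g)=\pi(g)\,\Lambda_\pi$ for all $g$, where $\Lambda_\pi=(\lambda_{\pi,ji})$. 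Since $\pi$ is irreducible, Schur's Lemma forces $\Lambda_\pi=c_\pi I$, so the $\mathcal{E}_\pi$-component of $f$ is $c_\pi\sum_i\pi_{ii}=c_\pi\chi_\pi$. This shows $\{\chi_\pi\}$ spans a dense subspace of $\mathcal{H}L_c^{2}(G,d\mu)$ and each class function is an orthogonal sum of its character-components. For the normalization I would compute $\|\chi_\pi\|^2=\sum_{i,j}\langle\pi_{ii},\pi_{jj}\rangle$; by the orthogonality and equal-norm conclusions of \eqref{e:formula} the cross terms vanish and each diagonal term contributes $C_{\pi,\mu}/d_\pi^2$, giving $\|\chi_\pi\|^2=d_\pi\cdot C_{\pi,\mu}/d_\pi^2=C_{\pi,\mu}/d_\pi$, which yields exactly the stated normalizing constant $\sqrt{d_\pi/C_{\pi,\mu}}$.

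For part (ii) I would argue exactly as in the second Peter-Weyl-type theorem (the density of $\mathcal{E}$ in $\mathcal{H}(G)$). Given $f\in\mathcal{H}_c(G)$, choose by Theorem \ref{T:existence} a $K$-admissible measure $d\mu$ with $f\in\mathcal{H}L^{2}(G,d\mu)$; then $f\in\mathcal{H}L_c^{2}(G,d\mu)$, so by part (i) it is a Hilbert-space limit of finite linear combinations of the $\chi_\pi$, that is, of elements of $\mathcal{E}_c$. Since $L^2$-$d\mu$-convergence implies locally uniform convergence (Condition (1) of $K$-admissibility), $f$ lies in the Fr\'echet closure of $\mathcal{E}_c$.

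I expect the main obstacle to be the careful justification of the Schur's-Lemma step in part (i): one must pass from the pointwise class-function identity to the intertwining relation $\Lambda_\pi\pi(g)=\pi(g)\Lambda_\pi$ rigorously. The subtlety is that the class-function condition is a priori only an identity of convergent series, so I would first restrict to $K$, where uniform convergence gives genuine equality of $L^2(K)$-Fourier coefficients and the compact-group matrix-element orthogonality applies cleanly; the resulting matrix identity, being holomorphic in $g$, then extends from $K$ to all of $G$ by Proposition \ref{T:extension}. Once this intertwining relation is in hand the remaining computations are routine applications of \eqref{e:formula}.
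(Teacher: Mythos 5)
Your proposal is correct and takes essentially the same route as the paper: both reduce part (i) to Schur's Lemma applied to the block Fourier data of a class function (your $\Lambda_\pi$ is exactly $d_\pi^2\,\widehat{f}(\pi)$ by (5.5)), conclude that each $\mathcal{E}_\pi$-component is a scalar multiple of $\chi_\pi$ with $\|\chi_\pi\|^2=C_{\pi,\mu}/d_\pi$, and obtain (ii) from Theorem \ref{T:existence} together with the fact that $L^2$-$d\mu$-convergence of holomorphic functions implies locally uniform convergence. The only (harmless) mechanical difference is in how the intertwining relation is derived: the paper gets $\pi(x)\widehat{f}(\pi)\pi(x)^{-1}=\widehat{f}(\pi)$ for $x\in K$ by the change of variables $g\mapsto x^{-1}gx$ in the defining integral, using the $K$-bi-invariance of $d\mu$ and the class property, and then extends to $G$ via Proposition \ref{T:extension}, whereas you extract the same relation from the uniqueness of the Peter--Weyl coefficients of $f|_K$ on the compact group $K$ before extending holomorphically.
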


\begin{proof}
By Theorem \ref{T:Peter-Weyl}, it is obvious that
$\{\sqrt{\frac{d_{\pi}}{C_{\pi,\mu}}}\;\chi_\pi:[\pi]\in\widehat{G}_{\mathcal{H}}
\}$ is an orthonormal set in $\mathcal{H}L_c^{2}(G,d\mu)$. To prove
the completeness, it is enough to show that each
$f\in\mathcal{H}L_c^{2}(G,d\mu)$ can be expressed as a series of the
form $\sum_{[\pi]\in\widehat{G}_{\mathcal{H}}}a_\pi\chi_\pi$. Notice
that for each $[\pi]\in\widehat{G}_{\mathcal{H}}$, $\pi|_K$ is
unitary, So for $x\in K$, we have
\begin{align*}
\pi(x)\widehat{f}(\pi)\pi(x)^{-1}
=&\frac{1}{C_{\pi,\mu}}\int_{G}f(g)\pi(x^{-1})^*\pi(g)^*\pi(x)^*\,d\mu(g)\\
=&\frac{1}{C_{\pi,\mu}}\int_{G}f(g)\pi(xgx^{-1})^*\,d\mu(g)\\
=&\frac{1}{C_{\pi,\mu}}\int_{G}f(x^{-1}gx)\pi(g)^*\,d\mu(g)\\
=&\frac{1}{C_{\pi,\mu}}\int_{G}f(g)\pi(g)^*\,d\mu(g)\\
=&\widehat{f}(\pi).
\end{align*}
This shows $\pi(g)\widehat{f}(\pi)$ and $\widehat{f}(\pi)\pi(g)$
agree on $K$. But they are holomorphic on $G$, by Proposition
\ref{T:extension}, $\pi(g)\widehat{f}(\pi)=\widehat{f}(\pi)\pi(g)$
for all $g\in G$. Since $\pi$ is irreducible, by Schur's Lemma,
$\widehat{f}(\pi)=b_\pi I$ for some constant $b_\pi$. By the Fourier
inversion formula (5.6), we have
$f=\sum_{[\pi]\in\widehat{G}_{\mathcal{H}}}d_\pi^2b_\pi\chi_\pi$,
which converges both in $\mathcal{H}L_c^{2}(G,d\mu)$ and locally
uniformly. This proves the theorem.
\end{proof}

\begin{corollary}
Every $f\in\mathcal{H}_c(G)$ can be expanded uniquely as
$$f=\sum_{[\pi]\in\widehat{G}_{\mathcal{H}}}a_\pi\chi_\pi.$$
The series converges locally uniformly. The coefficients $a_\pi$ can
be expressed as
$$a_\pi=\frac{d_{\pi}}{C_{\pi,\mu}}\int_{G}f(g)\overline{\chi_\pi(g)}\,d\mu(g)$$
for each $K$-admissible measure $d\mu$ on $G$ such that
$f\in\mathcal{H}L^{2}(G,d\mu).$
\end{corollary}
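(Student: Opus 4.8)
The plan is to read the expansion straight off Theorem~\ref{T:class}, and then dispose of uniqueness and independence of the coefficient formula from the auxiliary measure by restricting everything to the maximal compact subgroup $K$, in exact parallel with the earlier proof that the holomorphic Fourier transform does not depend on the chosen $K$-admissible measure.

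First I would fix $f\in\mathcal{H}_c(G)$ and invoke Theorem~\ref{T:existence} to produce a $K$-admissible measure $d\mu$ with $f\in\mathcal{H}L^{2}(G,d\mu)$; since $f$ is a class function this means $f\in\mathcal{H}L_c^{2}(G,d\mu)$. By Theorem~\ref{T:class}(i) the normalized characters $\sqrt{d_\pi/C_{\pi,\mu}}\,\chi_\pi$ form an orthonormal basis of this Hilbert space, so $f$ equals its own Fourier series in that basis. The coefficient of $\chi_\pi$ is then $a_\pi=\frac{d_\pi}{C_{\pi,\mu}}\langle f,\chi_\pi\rangle=\frac{d_\pi}{C_{\pi,\mu}}\int_G f(g)\overline{\chi_\pi(g)}\,d\mu(g)$, which is the asserted formula; one can cross-check this against the identity $a_\pi=d_\pi^2 b_\pi$ arising in the proof of Theorem~\ref{T:class}, using $\mathrm{tr}(\pi(g)^*)=\overline{\chi_\pi(g)}$ and $\mathrm{tr}(\widehat{f}(\pi))=d_\pi b_\pi$. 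Convergence in $\mathcal{H}L_c^{2}(G,d\mu)$ is automatic for an orthonormal-basis expansion, and Condition~(1) in the definition of a tame measure upgrades this $L^2$-convergence to locally uniform convergence, yielding the stated mode of convergence.

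Next I would establish uniqueness and $\mu$-independence in one stroke by passing to $K$. Under the bijection $\widehat{G}_{\mathcal{H}}\leftrightarrow\widehat{K}$ of Proposition~\ref{T:representation}, the restrictions $\chi_\pi|_K$ are precisely the distinct irreducible characters of the compact group $K$, hence an orthonormal family in $L^2(K,dx)$. Because the expansion $f=\sum_{[\pi]}a_\pi\chi_\pi$ converges locally uniformly, it converges uniformly on the compact set $K$, and on $K$ uniform convergence implies $L^2(K,dx)$-convergence; thus $f|_K=\sum_{[\pi]}a_\pi\chi_\pi|_K$ is genuinely the character expansion of the class function $f|_K$ on $K$. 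The $L^2(K)$ character coefficients are uniquely determined by $f|_K$, so the $a_\pi$ are unique, and since $f|_K$ depends only on $f$ (and, by Proposition~\ref{T:extension}, determines $f$), the $a_\pi$ cannot depend on the auxiliary measure. This simultaneously gives the uniqueness clause and the assertion that the displayed integral formula holds for every admissible $d\mu$ with $f\in\mathcal{H}L^{2}(G,d\mu)$.

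Since the argument is essentially a repackaging of Theorem~\ref{T:class}, I do not anticipate a serious obstacle. The only points needing care are the two small identifications above: verifying that the $\chi_\pi|_K$ for distinct $[\pi]$ really are distinct irreducible characters of $K$ (so that no two terms collapse and orthonormality transfers faithfully), and matching the two expressions for $a_\pi$ so that the clean integral formula is the one that survives. Both are routine consequences of the bijection $\widehat{G}_{\mathcal{H}}\leftrightarrow\widehat{K}$ and the computation of $\widehat{f}(\pi)=b_\pi I$ already carried out in the proof of Theorem~\ref{T:class}.
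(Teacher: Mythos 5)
Your proposal is correct and takes essentially the same route as the paper, whose entire proof is ``choose a $K$-admissible measure $d\mu$ with $f\in\mathcal{H}L^{2}(G,d\mu)$ and apply Theorem \ref{T:class}''; your coefficient computation and the $a_\pi=d_\pi^2 b_\pi$ cross-check just make that application explicit. Your restriction-to-$K$ argument for uniqueness and independence of $d\mu$ is precisely the technique the paper already uses to prove that $\widehat{f}_\mu=\widehat{f}_\nu$, so it fills in a detail the paper's terse proof leaves implicit rather than constituting a different approach.
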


\begin{proof}
Choose a $K$-admissible measure $d\mu$ such that
$f\in\mathcal{H}L^{2}(G,d\mu)$ and apply Theorem \ref{T:class}.
\end{proof}

Our next application of the holomorphic Fourier transform concerns
holomorphic evolution partial differential equations on a complex
reductive group $G$. Suppose $D$ is a left-invariant differential
operator on $G$. Consider the evolution equation with holomorphic
initial condition
\begin{equation}\label{E:PDE}
\begin{cases}
\frac{d}{dt}f_t=Df_t\\
f_0\in\mathcal{H}(G)
\end{cases}
\end{equation}
on $G$. We have the following result.

\begin{theorem}
If equation \eqref{E:PDE} has a holomorphic solution
$f_t\in\mathcal{H}(G)$ for $t>0$, then the solution $f_t$ can be
expressed as
\begin{equation}\label{E:solution}
f_t(g)=\sum_{[\pi]\in\widehat{G}_{\mathcal{H}}}
d_\pi^2\,tr(e^{t\pi(D)}\widehat{f_0}(\pi)\pi(g)).
\end{equation}
The series converges locally uniformly. Conversely, if the series in
the right hand side of \eqref{E:solution} converges locally
uniformly, then it converges to a holomorphic solution of equation
\eqref{E:PDE}.
\end{theorem}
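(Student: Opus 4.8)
The plan is to push the equation through the holomorphic Fourier transform, under which, by Corollary~\ref{C:D}, the operator $D$ becomes the fixed endomorphism $\pi(D)$ of $V_\pi$; the single PDE then decouples into one finite-dimensional linear ODE for each $[\pi]\in\widehat{G}_{\mathcal{H}}$. The one auxiliary fact I would record first is that $\widehat{f}(\pi)$ can be computed by integrating over the \emph{compact} group $K$ alone: the proof that the transform is independent of $d\mu$ shows that the $\lambda_{\pi,ij}$ are the ordinary Fourier coefficients of $f|_K$, and classical Peter--Weyl on $K$ then gives $\widehat{f}(\pi)=\frac{1}{d_\pi}\int_K f(x)\pi(x)^*\,dx$ with respect to the normalized Haar measure $dx$ on $K$. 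Integrating over the compact $K$ (rather than over $G$ against $d\mu$) is what will let me differentiate under the integral sign freely.

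For the direct implication, fix $[\pi]$ and put $A_\pi(t)=\widehat{f_t}(\pi)$, an $\mathrm{End}(V_\pi)$-valued function of $t$. Differentiating the $K$-integral formula under the integral sign (legitimate since $K$ is compact and $(t,x)\mapsto(Df_t)(x)=\frac{d}{dt}f_t(x)$ is continuous) and invoking Corollary~\ref{C:D} gives
\[
A_\pi'(t)=\frac{1}{d_\pi}\int_K(Df_t)(x)\pi(x)^*\,dx=(Df_t)\,\widehat{}\,(\pi)=\pi(D)\widehat{f_t}(\pi)=\pi(D)A_\pi(t).
\]
This is a linear ODE with constant coefficient $\pi(D)$ and initial value $A_\pi(0)=\widehat{f_0}(\pi)$, whence $A_\pi(t)=e^{t\pi(D)}\widehat{f_0}(\pi)$. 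Substituting $\widehat{f_t}(\pi)=e^{t\pi(D)}\widehat{f_0}(\pi)$ into the Fourier inversion formula (5.6) for $f_t$ yields \eqref{E:solution}, with the asserted local uniform convergence supplied by the inversion theorem.

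For the converse, write $F_t(g)$ for the right-hand side of \eqref{E:solution} and assume it converges locally uniformly; then $F_t\in\mathcal{H}(G)$ as a locally uniform limit of holomorphic functions, and at $t=0$ the inversion formula (5.6) gives $F_0=f_0$. The evolution equation I would verify termwise. Setting $u_\pi(t,g)=d_\pi^2\,\mathrm{tr}(e^{t\pi(D)}\widehat{f_0}(\pi)\pi(g))$ and using $D\pi(g)=\pi(g)\pi(D)$ (which holds for every $D\in U(\mathfrak{g})$, since $(X\pi_{ij})(g)=(\pi(g)\pi(X))_{ij}$ for $X\in\mathfrak{g}$) together with the cyclic invariance of the trace, the action of $D$ in the variable $g$ gives
\[
(Du_\pi)(t,g)=d_\pi^2\,\mathrm{tr}(\pi(D)e^{t\pi(D)}\widehat{f_0}(\pi)\pi(g))=\frac{\partial}{\partial t}u_\pi(t,g),
\]
the last equality because $\pi(D)$ commutes with $e^{t\pi(D)}$. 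Thus each summand already satisfies $\frac{\partial}{\partial t}u_\pi=Du_\pi$.

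The main obstacle is justifying the two termwise limit operations. Applying $D$ termwise is harmless: a locally uniformly convergent series of holomorphic functions may be differentiated termwise by the Cauchy estimates, so $DF_t=\sum_{[\pi]}(Du_\pi)(t,\cdot)$ converges locally uniformly. The $t$-derivative is the delicate point, and rather than differentiate the series I would integrate it: from $e^{t\pi(D)}-e^{s\pi(D)}=\int_s^t\pi(D)e^{u\pi(D)}\,du$ one obtains, after interchanging the trace, the sum over $[\pi]$, and the $u$-integral,
\[
F_t(g)-F_s(g)=\int_s^t\Big(\sum_{[\pi]}d_\pi^2\,\mathrm{tr}(\pi(D)e^{u\pi(D)}\widehat{f_0}(\pi)\pi(g))\Big)\,du=\int_s^t(DF_u)(g)\,du,
\]
and then the fundamental theorem of calculus gives $\frac{d}{dt}F_t=DF_t$, so that $F_t$ solves \eqref{E:PDE}. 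The one point genuinely needing care is this interchange of summation and integration, i.e.\ promoting the hypothesized local uniform convergence to uniformity jointly for $u$ in compact $t$-intervals and $g$ in compact subsets of $G$; I expect that, and not any algebraic step, to be where the real work lies.
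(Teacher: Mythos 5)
Your proposal is correct and its skeleton is exactly the paper's: transform the equation, use Corollary~\ref{C:D} to decouple it into the matrix ODEs $\frac{d}{dt}\widehat{f_t}(\pi)=\pi(D)\widehat{f_t}(\pi)$, solve, and invert. But you supply two justifications the paper silently skips, and both are sound. First, your auxiliary formula $\widehat{f}(\pi)=\frac{1}{d_\pi}\int_K f(x)\pi(x)^{*}\,dx$ is correct (it follows from the measure-independence argument plus the classical Schur orthogonality $\|\pi_{ij}\|^2_{L^2(K)}=1/d_\pi$ on $K$), and it buys something real: the paper's step $\frac{d}{dt}\widehat{f_t}(\pi)=(Df_t)\,\widehat{}\,(\pi)$ differentiates under an integral over the noncompact $G$ and implicitly needs a single $K$-admissible measure serving the \emph{uncountable} family $\{f_t\}_{t>0}$, which Theorem~\ref{T:existence} only provides for countable families; your compact-group formula sidesteps this entirely at the cost of a mild $t$-regularity assumption ($(t,x)\mapsto (Df_t)(x)$ continuous on $(0,\infty)\times K$), which is no stronger than what the paper tacitly assumes in calling $f_t$ a solution. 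Second, for the converse the paper says only that ``a straightforward computation'' works; your termwise identity $Du_\pi=\frac{\partial}{\partial t}u_\pi$ via $D\pi(g)=\pi(g)\pi(D)$ and cyclicity of the trace, followed by the integrate-then-FTC argument, is a correct implementation of that computation. The interchange you flag at the end is genuinely the only delicate point, but it closes once the hypothesis ``converges locally uniformly'' is read jointly in $(t,g)$ (the natural reading, and the one under which the theorem is true): then for the partial sums $F^N$ one has $F^N_t(g)-F^N_s(g)=\int_s^t (DF^N_u)(g)\,du$, and the Cauchy estimates, applied uniformly in the parameter $u$, give $DF^N_u\to DF_u$ uniformly for $(u,g)$ in compact sets, so you may pass to the limit on both sides; under a merely pointwise-in-$t$ reading the interchange could indeed fail, but that ambiguity sits in the theorem statement itself, and your treatment is already more careful than the paper's.
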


\begin{proof}
Suppose equation \eqref{E:PDE} has a holomorphic solution
$f_t\in\mathcal{H}(G)$ for $t>0$. Applying the holomorphic Fourier
transform to both side of the equation, by Corollary \ref{C:D}, we
get
$$\frac{d}{dt}\widehat{f_t}(\pi)=(Df_t)\,\widehat{}\,(\pi)=\pi(D)\widehat{f_t}(\pi).$$
This ordinary differential equation of matrix form can be solved
directly as
$$\widehat{f_t}(\pi)=e^{t\pi(D)}\widehat{f_0}(\pi).$$
By the Fourier inversion formula (5.6), we get the desired
expression of $f_t$. Conversely, if the series in the right hand
side of \eqref{E:solution} converges locally uniformly, then a
straightforward computation shows that the resulting function $f_t$
determined by \eqref{E:solution} dose form a holomorphic solution of
equation \eqref{E:PDE}.
\end{proof}

\end{document}